\newtheorem{theorem}{Theorem}[section]
\newtheorem{proposition}[theorem]{Proposition}
\newtheorem{lemma}[theorem]{Lemma}
\newtheorem{remark}[theorem]{Remark}
\DeclareMathOperator{\con}{con}
\DeclareMathOperator{\gr}{gr}
\DeclareMathOperator{\Perm}{Perm}
\DeclareMathOperator{\Stab}{Stab}
\DeclareMathOperator{\var}{var}
\numberwithin{equation}{section}
\renewcommand*\subjclass[2][2010]{\def\@subjclass{#2}\@ifundefined{subjclassname@#1}{\ClassWarning{\@classname}{Unknown edition (#1) of Mathematics Subject Classification; using '2010'.}}{\@xp\let\@xp\subjclassname\csname subjclassname@#1\endcsname}}
\begin{document}

\title[On modular elements of the lattice of semigroup varieties]{On modular and cancellable elements\\
of the lattice of semigroup varieties}

\author{D.~V.~Skokov and B.~M.~Vernikov}

\address{Ural Federal University, Institute of Natural Sciences and Mathematics, Lenina 51, 620083 Ekaterinburg, Russia}

\email{bvernikov@gmail.com, dmitry.skokov@gmail.com}

\date{}

\thanks{The work was partially supported by the Ministry of Education and Science of the Russian Federation (project 1.6018.2017/8.9) and by the Russian Foundation for Basic Research (grant No.\,17-01-00551).}

\begin{abstract}
We completely determine all semigroup varieties satysfiyng a permutational identity of length 3 that are modular elements of the lattice of all semigroup varieties. Using this result, we provide an example of a semigroup variety that is a modular but not cancellable element of this lattice.
\end{abstract}

\keywords{Semigroup, variety, lattice of varieties, permutational identity, modular element of a lattice, cancellable element of a lattice}

\subjclass{Primary 20M07, secondary 08B15}

\maketitle

\section{Introduction and summary}
\label{intr}

There are a number of articles devoted to an examination of special elements in the lattice \textbf{SEM} of all semigroup varieties (see surveys~\cite[Section~14]{Shevrin-Vernikov-Volkov-09} and~\cite{Vernikov-15}). Here we continue these considerations and examine special elements of two types, namely, modular and cancellable elements. Recall that an element $x$ of a lattice $\langle L;\vee,\wedge\rangle$ is called
\begin{align*}
&\text{\emph{modular} if}\quad&&(\forall y,z\in L)\quad \bigl(y\le z\longrightarrow(x\vee y)\wedge z=(x\wedge z)\vee y\bigr),\\
&\text{\emph{cancellable} if}\quad&&(\forall y,z\in L)\quad (x\vee y=x\vee z\ \&\ x\wedge y=x\wedge z\longrightarrow y=z).
\end{align*}
It is evident that every cancellable element of a lattice is modular. A valuable information about modular and cancellable elements in abstract lattices can be found in~\cite{Seselja-Tepavcevic-01}, for instance.

Several results about modular elements of the lattice \textbf{SEM} were provided in the papers~\cite{Jezek-McKenzie-93,Vernikov-07,Shaprynskii-12}. In particular, commutative semigroup varieties that are modular elements in \textbf{SEM} are completely determined in~\cite[Theorem~3.1]{Vernikov-07}. Further, it is verified in~\cite{Gusev-Skokov-Vernikov-17+} that the properties of being modular and cancellable elements of \textbf{SEM} are equivalent in the class of commutative semigroup varieties. The objective of this article is to prove that this equivalence is not the case in slightly wider class, namely in the class of semigroup varieties satisfying a permutational identity of length~3.

A semigroup variety is called a \emph{nil-variety} if it consists of nilsemigroups. Semigroup words unlike letters are written in bold. Two sides of identities we connect by the symbol~$\approx$, while the symbol~$=$ stands for the equality relation on the free semigroup. As usual, we write the pair of identities $x\mathbf{u\approx u}x\approx\mathbf u$ where the letter $x$ does not occur in the word \textbf u in the short form $\mathbf u\approx 0$ and refer to the expression $\mathbf u\approx 0$ as to a single identity. A \emph{permutational} identity is an identity of the form
\begin{equation}
\label{permut id}
x_1x_2\cdots x_n\approx x_{1\pi}x_{2\pi}\cdots x_{n\pi}
\end{equation}
where $\pi$ is a non-trivial permutation on the set $\{1,2,\dots,n\}$. The number $n$ is called a \emph{length} of the identity~\eqref{permut id}. We denote by \textbf T the trivial semigroup variety and by \textbf{SL} the variety of all semilattices.

The main result of this note is the following

\begin{theorem}
\label{permut-3}
A semigroup variety $\mathbf V$ satisfying a permutational identity of length~$3$ is a modular element in the lattice $\mathbf{SEM}$ if and only if $\mathbf V=\mathbf{M\vee N}$ where $\mathbf M$ is one of the  varieties $\mathbf T$ or $\mathbf{SL}$, while the variety $\mathbf N$ satisfies one of the following identity systems:
\begin{align}
\label{xyz=zyx,xxy=0}&xyz\approx zyx,\,x^2y\approx 0;\\
\label{xyz=yzx,xxy=0}&xyz\approx yzx,\,x^2y\approx 0;\\
\label{xyz=yxz,xyy=0}&xyz\approx yxz,\,xyzt\approx xzty,\,xy^2\approx 0;\\
\label{xyz=xzy,xxy=0}&xyz\approx xzy,\,xyzt\approx yzxt,\,x^2y\approx 0.
\end{align}
\end{theorem}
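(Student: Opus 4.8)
The plan is to prove the two implications separately; the heart of the matter is the ``only if'' part, where I would lean on the structural restrictions on and descriptions of modular elements of $\mathbf{SEM}$ from~\cite{Jezek-McKenzie-93,Vernikov-07,Shaprynskii-12} and on the well-known fact that $\mathbf{SL}$ is a neutral — in particular standard — element of $\mathbf{SEM}$. So let $\mathbf V$ satisfy a permutational identity of length~$3$ and be a modular element of $\mathbf{SEM}$. A variety satisfying a permutational identity has abelian subgroups only; combining this with the cited descriptions, one first shows that $\mathbf V$ is not overcommutative and that $\mathbf V=\mathbf M\vee\mathbf N$, where $\mathbf N$ is a nil-variety and $\mathbf M$ is one of the six varieties of bands that satisfy a length-$3$ permutational identity, namely $\mathbf T$, $\mathbf{SL}$, $\var\{xy\approx x\}$, $\var\{xy\approx y\}$, $\var\{xy\approx x\}\vee\mathbf{SL}$, $\var\{xy\approx y\}\vee\mathbf{SL}$. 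Modularity is then invoked a second time to discard the last four: were $\var\{xy\approx x\}$ or $\var\{xy\approx y\}$ contained in $\mathbf V$, substituting a suitably chosen pair $\mathbf Y\le\mathbf Z$ into the modular law $(\mathbf V\vee\mathbf Y)\wedge\mathbf Z=(\mathbf V\wedge\mathbf Z)\vee\mathbf Y$ would contradict the length-$3$ permutational identity of $\mathbf V$. Hence $\mathbf M\in\{\mathbf T,\mathbf{SL}\}$.

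It then remains to identify the nil-variety $\mathbf N$, which inherits a length-$3$ permutational identity. I would argue by cases on the corresponding permutation $\pi$; since $xyz\approx yzx$ entails $xyz\approx zxy$ (and dually for the two neighbour-transposition identities), it suffices to treat the identities $xyz\approx zyx$, $xyz\approx yzx$, $xyz\approx yxz$, $xyz\approx xzy$, which will give rise, respectively, to the families~\eqref{xyz=zyx,xxy=0}, \eqref{xyz=yzx,xxy=0}, \eqref{xyz=yxz,xyy=0}, \eqref{xyz=xzy,xxy=0} (the first two being self-dual, the last two dual to each other, so effectively two cases modulo the left--right duality of $\mathbf{SEM}$). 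In each case one tests the modular law of $\mathbf V=\mathbf M\vee\mathbf N$ against pairs involving the small nil-varieties $\var\{xy\approx0\}$, $\var\{xyz\approx0\}$, $\var\{x^2y\approx0\}$, $\var\{xy^2\approx0\}$ and the nilpotent varieties of small class, and is forced to conclude that $\mathbf N$ satisfies the appropriate index law ($x^2y\approx0$ or $xy^2\approx0$) together with, in the last two cases, the length-$4$ identity $xyzt\approx xzty$, respectively $xyzt\approx yzxt$; simultaneously one must check that no nil-variety lying outside all four families is a modular element. Carrying out this case analysis — above all, verifying that the list is exhaustive — is, I expect, the main obstacle of the whole argument.

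For the ``if'' part, since $\mathbf T$ is the least element of $\mathbf{SEM}$ and $\mathbf{SL}$ is standard, it is enough to prove that each $\mathbf N$ satisfying one of~\eqref{xyz=zyx,xxy=0}--\eqref{xyz=xzy,xxy=0} is a modular element. Indeed, if $\mathbf M$ is standard and $\mathbf N$ is modular then $\mathbf M\vee\mathbf N$ is again modular: for $\mathbf Y\le\mathbf Z$, standardness of $\mathbf M$ and the modular law for $\mathbf N$ yield $((\mathbf M\vee\mathbf N)\vee\mathbf Y)\wedge\mathbf Z=(\mathbf Z\wedge\mathbf M)\vee(\mathbf Z\wedge\mathbf N)\vee\mathbf Y=((\mathbf M\vee\mathbf N)\wedge\mathbf Z)\vee\mathbf Y$. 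To see that such an $\mathbf N$ is a modular element one may argue directly: the inclusion $(\mathbf N\wedge\mathbf Y)\vee\mathbf X\le(\mathbf N\vee\mathbf X)\wedge\mathbf Y$ always holds, and for the reverse one computes a basis of identities of $\mathbf N\vee\mathbf X$ and verifies that any identity valid in $(\mathbf N\vee\mathbf X)\wedge\mathbf Y$ is valid in $(\mathbf N\wedge\mathbf Y)\vee\mathbf X$; the strength of the identities defining $\mathbf N$ (permutativity together with $x^2y\approx0$ or $xy^2\approx0$, under which almost every long word equals $0$) keeps the required word combinatorics finite. Combining the two directions yields the theorem.
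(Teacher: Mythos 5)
Your necessity half is workable in outline, but you are re-deriving, by rather vague means, facts the literature already supplies: the decomposition $\mathbf V=\mathbf M\vee\mathbf N$ with $\mathbf M\in\{\mathbf T,\mathbf{SL}\}$ and $\mathbf N$ a nil-variety is exactly \cite[Proposition~2.1]{Vernikov-07} (there is no need to pass through the six band varieties and then eliminate four of them), and the identities $x^2y\approx0$ resp.\ $xy^2\approx0$ together with \emph{all} permutational identities of length~4 --- which is where the extra identities $xyzt\approx xzty$ and $xyzt\approx yzxt$ in systems \eqref{xyz=yxz,xyy=0} and \eqref{xyz=xzy,xxy=0} come from --- are \cite[Theorem~4.5]{Vernikov-07}. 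Your phrase ``one tests the modular law against small nil-varieties and is forced to conclude\dots'' is a plan, not an argument, but at least the statements you need are known and citable, so this direction can be repaired by citation.

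The genuine gap is the sufficiency direction, which is the substance of the theorem and which your proposal does not prove. The claim that one ``computes a basis of identities of $\mathbf N\vee\mathbf X$'' and that ``the word combinatorics is finite'' does not survive inspection. First, you have the inclusion backwards: to establish $(\mathbf N\vee\mathbf X)\wedge\mathbf Y\le(\mathbf N\wedge\mathbf Y)\vee\mathbf X$ one must show that every identity of $(\mathbf N\wedge\mathbf Y)\vee\mathbf X$ holds in $(\mathbf N\vee\mathbf X)\wedge\mathbf Y$; the identities of the \emph{meet} $\mathbf N\wedge\mathbf Y$ are those deducible from identities of $\mathbf N$ together with identities of an arbitrary variety $\mathbf Y$, and neither this set nor a basis of the join $\mathbf N\vee\mathbf X$ is computable in any useful sense. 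Second, the combinatorics is not finite: the identities $x^2y\approx0$ or $xy^2\approx0$ annihilate almost all non-linear words, but linear words $x_1x_2\cdots x_k$ of every length $k$ survive in $\mathbf N$, so one must control which permutational identities of each length $k\ge3$ can be produced by a deduction alternating between identities of $\mathbf N$ and of $\mathbf Y$. This is exactly what the paper's proof does: it analyzes a shortest deduction $\mathbf w_0,\dots,\mathbf w_m$, partitions the words into those that equal $0$ in $\mathbf N$, the linear ones, and the handful of surviving non-linear ones ($x^2$, $xyx$, $x^2y$, $xy^2$), proves half a dozen structural lemmas about such deductions, and invokes Poll\'ak's theorem on consequences of permutation identities together with the subgroup lattices of $\mathbb S_3$ and $\mathbb S_k$ to close the linear case. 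None of this machinery is present in, or replaced by anything in, your proposal, so the ``if'' direction remains unproven.
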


We denote by $\mathbb S_n$ the full permutation group on the set $\{1,2,\dots,n\}$. The semigroup variety given by an identity system $\Sigma$ is denoted by $\var\Sigma$. In~\cite[Question~3.2]{Gusev-Skokov-Vernikov-17+}, the question is asked whether there is a semigroup variety that is a modular but not cancellable element of the lattice \textbf{SEM}. The following result gives the affirmative answer to this question.

\begin{proposition}
\label{modular not cancellable}
Let $\rho$ be a non-trivial permutation from $\mathbb S_3$. The variety $\var\{xyzt\approx xyx\approx x^2\approx 0,\,x_1x_2x_3\approx x_{1\rho}x_{2\rho}x_{3\rho}\}$ is a modular but not cancellable element of the lattice $\mathbf{SEM}$.
\end{proposition}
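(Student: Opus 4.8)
The statement separates into two claims: that $\mathbf V:=\var\{xyzt\approx xyx\approx x^2\approx0,\,x_1x_2x_3\approx x_{1\rho}x_{2\rho}x_{3\rho}\}$ is a modular element of $\mathbf{SEM}$, and that it is not a cancellable one. For modularity I would invoke Theorem~\ref{permut-3} directly. From $x^2\approx0$ one gets $x^2y\approx0$ and $xy^2\approx0$; from $xyzt\approx0$ (which makes every word of length $\ge4$ equal to the zero) one gets $xyzt\approx xzty$ and $xyzt\approx yzxt$; and, up to the equivalence of laws $xyz\approx zxy\Leftrightarrow xyz\approx yzx$, the permutational identity defining $\mathbf V$ is one of $xyz\approx zyx$, $xyz\approx yzx$, $xyz\approx yxz$, $xyz\approx xzy$. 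Hence (which case occurs depends on $\rho$) $\mathbf V$ satisfies one of the systems \eqref{xyz=zyx,xxy=0}--\eqref{xyz=xzy,xxy=0}, so $\mathbf V=\mathbf T\vee\mathbf V$ falls under Theorem~\ref{permut-3} with $\mathbf M=\mathbf T$ and $\mathbf N=\mathbf V$; thus $\mathbf V$ is modular.

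For non-cancellability I would construct varieties $\mathbf Y\ne\mathbf Z$ with $\mathbf V\vee\mathbf Y=\mathbf V\vee\mathbf Z$ and $\mathbf V\wedge\mathbf Y=\mathbf V\wedge\mathbf Z$. Since $\mathbf V$ is already known to be modular, in any such pair $\mathbf Y$ and $\mathbf Z$ must be incomparable (if, say, $\mathbf Y<\mathbf Z$, then $\{\mathbf V\wedge\mathbf Y,\mathbf Y,\mathbf Z,\mathbf V,\mathbf V\vee\mathbf Y\}$ is a sublattice isomorphic to $N_5$ in which $\mathbf V$ is not a modular element), so the natural goal is to realize $\mathbf V$ as one of the three atoms of a copy of $\mathbf M_3$. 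For $\sigma\in\mathbb S_3$ put $\mathbf V_\sigma:=\var\{xyzt\approx xyx\approx x^2\approx0,\,x_1x_2x_3\approx x_{1\sigma}x_{2\sigma}x_{3\sigma}\}$, and set $\mathbf D:=\var\{xyzt\approx xyx\approx x^2\approx0\}$ and $\mathbf C:=\var\{xyzt\approx xyx\approx x^2\approx0,\,xyz\approx yxz,\,xyz\approx xzy\}$. A routine inspection of the relatively free semigroups of these (small nil-)varieties shows that in each of them the nonzero elements are represented exactly by the linear words (those in pairwise distinct letters) of length $\le3$, two length-$3$ linear words being identified: never in $\mathbf D$; precisely when they lie in the same $\langle\sigma\rangle$-orbit in $\mathbf V_\sigma$; and precisely when they involve the same three letters in $\mathbf C$ (here one uses $\langle(12),(23)\rangle=\mathbb S_3$). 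Consequently, for distinct $\sigma,\tau$ with $\langle\sigma\rangle\ne\langle\tau\rangle$ we have $\langle\sigma,\tau\rangle=\mathbb S_3$, and, since the meet of two varieties is axiomatized by the union of their identity bases, $\mathbf V_\sigma\wedge\mathbf V_\tau=\mathbf C$; and $\langle\sigma\rangle\cap\langle\tau\rangle=\{\mathrm{id}\}$, so two distinct linear words can never be identified simultaneously in $\mathbf V_\sigma$ and in $\mathbf V_\tau$, whence the natural homomorphism $F_{\mathbf D}(X)\to F_{\mathbf V_\sigma}(X)\times F_{\mathbf V_\tau}(X)$ is injective and $\mathbf V_\sigma\vee\mathbf V_\tau=\mathbf D$. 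As $\mathbf C\subsetneq\mathbf V_{(12)},\mathbf V_{(13)},\mathbf V_{(23)},\mathbf V_{(123)}\subsetneq\mathbf D$ and these four varieties are pairwise incomparable (note $\mathbf V_{(132)}=\mathbf V_{(123)}$), any three of them together with $\mathbf C$ and $\mathbf D$ form a copy of $\mathbf M_3$ inside $\mathbf{SEM}$.

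Finally, the variety in the statement is $\mathbf V_\rho$, which is one of $\mathbf V_{(12)},\mathbf V_{(13)},\mathbf V_{(23)},\mathbf V_{(123)}$; taking $\mathbf Y$ and $\mathbf Z$ to be two other varieties from this list gives $\mathbf V_\rho\vee\mathbf Y=\mathbf D=\mathbf V_\rho\vee\mathbf Z$, $\mathbf V_\rho\wedge\mathbf Y=\mathbf C=\mathbf V_\rho\wedge\mathbf Z$ and $\mathbf Y\ne\mathbf Z$, so $\mathbf V_\rho$ is not cancellable. I expect the main obstacle to be the join computation $\mathbf V_\sigma\vee\mathbf V_\tau=\mathbf D$: it requires pinning down the relatively free semigroups of $\mathbf D$ and of the $\mathbf V_\sigma$ and verifying that distinct linear words of length $\le3$ stay separated in $F_{\mathbf V_\sigma}(X)\times F_{\mathbf V_\tau}(X)$ --- exactly the point where $\langle\sigma\rangle\cap\langle\tau\rangle=\{\mathrm{id}\}$ is used.
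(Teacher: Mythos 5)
Your argument is correct and takes essentially the same route as the paper: modularity is obtained from Theorem~\ref{permut-3} exactly as you describe, and for non-cancellability the paper uses the very same witnesses $\mathbf V_\sigma$ and $\mathbf V_\tau$ with $\gr\{\rho\}$, $\gr\{\sigma\}$, $\gr\{\tau\}$ pairwise distinct. The only difference is packaging: instead of pinning down the join and meet explicitly (your $\mathbf D$ and $\mathbf C$, yielding an $\mathbf M_3$) via the relatively free semigroups, the paper derives $\mathbf{V\vee X}=\mathbf{V\vee Y}$ and $\mathbf{V\wedge X}=\mathbf{V\wedge Y}$ from its Lemma~\ref{Perm_n} on $\Perm_3$ of joins and meets, the subgroup lattice of $\mathbb S_3$, and a short case analysis of the possible identities --- the same computation, with the free-semigroup bookkeeping you flag as the main obstacle absorbed into that lemma.
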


The article consists of three sections. Section~\ref{prel} contains auxiliary results. In Section~\ref{proof} we verify Theorem~\ref{permut-3} and Proposition~\ref{modular not cancellable}.

\section{Preliminaries}
\label{prel}

We denote by $F$ the free semigroup over a countably infinite alphabet. If $\mathbf u\in F$ then $\con(\mathbf u)$ denotes the set of all letters occurring in the word \textbf u. The following claim is well known and easily verified.

\begin{lemma}
\label{u=v in SL}
An identity $\mathbf{u\approx v}$ holds in the variety $\mathbf{SL}$ if and only if $\con(\mathbf u)=\con(\mathbf v)$.\qed
\end{lemma}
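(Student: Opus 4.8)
The plan is to use the fact that the variety $\mathbf{SL}$ is defined by the two identities $xy \approx yx$ (commutativity) and $x^2 \approx x$ (idempotency). For the sufficiency direction, suppose $\con(\mathbf u) = \con(\mathbf v)$. Working modulo these two identities, I would reduce an arbitrary word to a canonical form: using commutativity, reorder the occurrences of letters so that equal letters become adjacent and the distinct letters appear in a fixed (say, alphabetical) order; then, using idempotency, collapse each block of repeated occurrences of a single letter into one occurrence. The resulting canonical word $\mathbf u_0$ is the product, in the fixed order, of each letter of $\con(\mathbf u)$ taken exactly once. Since $\mathbf{u \approx u}_0$ is a consequence of the defining identities, it holds in $\mathbf{SL}$, and likewise $\mathbf{v \approx v}_0$. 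As $\con(\mathbf u) = \con(\mathbf v)$, the two canonical words coincide, $\mathbf u_0 = \mathbf v_0$, whence $\mathbf{u \approx v}$ holds in $\mathbf{SL}$.

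For the necessity direction, suppose $\mathbf{u \approx v}$ holds in $\mathbf{SL}$ but, seeking a contradiction, $\con(\mathbf u) \ne \con(\mathbf v)$; without loss of generality, let a letter $x$ occur in $\mathbf u$ but not in $\mathbf v$. I would exhibit a concrete semilattice together with a substitution distinguishing the two words. Take the two-element semilattice on $\{0,1\}$ with the meet operation, so that $0\cdot 0 = 0\cdot 1 = 1\cdot 0 = 0$ and $1\cdot 1 = 1$; here $0$ is an absorbing element, and this semigroup lies in $\mathbf{SL}$. Substitute $0$ for $x$ and $1$ for every other letter. Then $\mathbf u$ evaluates to $0$, because at least one of its letters, namely $x$, is sent to the absorbing element $0$, while $\mathbf v$ evaluates to $1$, since none of its letters is sent to $0$. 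This contradicts the assumption that $\mathbf{u \approx v}$ holds in $\mathbf{SL}$, completing the argument.

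The lemma is elementary, so I do not anticipate a genuine obstacle. The only point requiring a little care is verifying that the reduction to canonical form is indeed derivable from the two defining identities alone, that is, that each rearrangement and collapse step is an application of commutativity or idempotency to a subword of a larger word; this is routine once one recalls that identities may be applied to subwords. One could alternatively phrase the whole argument through the free semilattice, identified with the semigroup of finite nonempty subsets under union via the map $\mathbf u \mapsto \con(\mathbf u)$, but the direct computation above is shortest.
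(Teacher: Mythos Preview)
Your proof is correct. The paper does not actually give a proof of this lemma: it is stated as ``well known and easily verified'' and closed immediately with a \textsf{qed} symbol. Your argument supplies precisely the routine verification the authors omit, and both directions are handled cleanly; the free-semilattice reformulation you mention at the end would work equally well.
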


An element $x$ of a lattice $\langle L;\vee,\wedge\rangle$ is called \emph{neutral} if, for any $y,z\in L$, the sublattice of $L$ generated by $x$, $y$ and $z$ is distributive. It is well known that the variety \textbf{SL} is a neutral element of the lattice \textbf{SEM}~\cite[Proposition~4.1]{Volkov-05} and an atom of this lattice (see the survey~\cite{Shevrin-Vernikov-Volkov-09}, for instance). This fact together with~\cite[Corollary~2.1]{Shaprynskii-11} immediately imply the following

\begin{lemma}
\label{+-SL}
For a semigroup variety $\mathbf V$, the following are equivalent:
\begin{itemize}
\item[\textup{(i)}] the variety $\mathbf V$ is a modular element of the lattice $\mathbf{SEM}$;
\item[\textup{(ii)}] the variety $\mathbf{SL\vee V}$ is a modular element of the lattice $\mathbf{SEM}$;
\item[\textup{(iii)}] the variety $\mathbf{SL\vee V}$ is a modular element of the coideal $[\mathbf{SL})$ of the lattice $\mathbf{SEM}$.\qed
\end{itemize}
\end{lemma}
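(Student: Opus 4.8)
The plan is to deduce the lemma formally from the two facts just recorded about $\mathbf{SL}$ — that it is a neutral element of $\mathbf{SEM}$ (by \cite[Proposition~4.1]{Volkov-05}) and an atom — together with the general lattice-theoretic statement \cite[Corollary~2.1]{Shaprynskii-11}, so that no semigroup-specific computation is needed. The conceptual engine is that, for a neutral element $n$ of a lattice $L$, the map $\mathbf X\mapsto n\vee\mathbf X$ is a lattice homomorphism of $L$ onto the coideal $[n)$ which restricts to the identity on $[n)$; this is what allows modularity to be transported between $L$ and $[n)$ along the join-with-$\mathbf{SL}$ map. I read \cite[Corollary~2.1]{Shaprynskii-11} as asserting, in the form I shall use, that an element $v$ of $L$ is modular in $L$ if and only if $v\vee n$ is modular in the coideal $[n)$.

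First I would apply this corollary with $n=\mathbf{SL}$ and $v=\mathbf V$. Its conclusion then reads: $\mathbf V$ is a modular element of $\mathbf{SEM}$ if and only if $\mathbf{SL}\vee\mathbf V$ is a modular element of the coideal $[\mathbf{SL})$, which is precisely the equivalence (i)$\Longleftrightarrow$(iii).

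Next I would invoke the same corollary a second time, now with $n=\mathbf{SL}$ and $v=\mathbf{SL}\vee\mathbf V$. Since $v\ge\mathbf{SL}$, we have $v\vee\mathbf{SL}=\mathbf{SL}\vee\mathbf V$, so the corollary yields: $\mathbf{SL}\vee\mathbf V$ is a modular element of $\mathbf{SEM}$ if and only if $\mathbf{SL}\vee\mathbf V$ is a modular element of $[\mathbf{SL})$. This is exactly the equivalence (ii)$\Longleftrightarrow$(iii). Chaining the two equivalences through condition (iii) gives (i)$\Longleftrightarrow$(ii)$\Longleftrightarrow$(iii) and completes the argument.

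The only genuine obstacle is to confirm that \cite[Corollary~2.1]{Shaprynskii-11} truly applies with these hypotheses and in this direction: one must check that $\mathbf{SL}$ meets its neutrality requirement (supplied by \cite[Proposition~4.1]{Volkov-05}) and keep careful track of which variety plays the role of $v$ in each of the two applications, so that the invocations deliver (i)$\Leftrightarrow$(iii) and (ii)$\Leftrightarrow$(iii) separately rather than collapsing to a single tautology. If instead the cited corollary is stated with conclusion ``$v\vee n$ is modular in $L$'' rather than ``in $[n)$'', then the first application gives (i)$\Leftrightarrow$(ii) directly, and the passage (ii)$\Leftrightarrow$(iii) — modularity of a coideal element measured in the whole lattice versus in the coideal — must instead be extracted from the fact that the join-with-$\mathbf{SL}$ map is a retraction homomorphism onto $[\mathbf{SL})$; either reading yields the lemma by the same two-step bookkeeping.
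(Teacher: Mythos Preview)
Your approach is essentially the paper's own: the paper does not give a detailed proof but simply records that $\mathbf{SL}$ is neutral in $\mathbf{SEM}$ (\cite[Proposition~4.1]{Volkov-05}) and an atom, and then states that these facts together with \cite[Corollary~2.1]{Shaprynskii-11} ``immediately imply'' the lemma, marking it with a \textsc{qed} box. Your write-up is a correct and careful unpacking of exactly that implication, applying the cited corollary twice (once with $v=\mathbf V$, once with $v=\mathbf{SL}\vee\mathbf V$) to obtain the three-way equivalence; the hedging about the precise formulation of the corollary is prudent but either reading you describe suffices.
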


The following claim gives a strong necessary condition for a semigroup variety to be a modular element in the lattice \textbf{SEM}.

\begin{proposition}
\label{nec}
If $\mathbf V$ is a modular  element of the lattice $\mathbf{SEM}$ then either $\mathbf V$ coincides with the variety of all semigroups or $\mathbf V=\mathbf{M\vee N}$ where $\mathbf M$ is one of the varieties $\mathbf T$ or $\mathbf{SL}$, while  $\mathbf N$ is a nil-variety.\qed
\end{proposition}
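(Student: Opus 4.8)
The plan is to argue directly about the structure of $\mathbf V$: assuming $\mathbf V$ is a modular element with $\mathbf V\neq\mathbf{SEM}$ (the variety of all semigroups), I will show successively that $\mathbf V$ is periodic, that it is aperiodic and its idempotents form semilattices, and finally that the semilattice part \emph{splits off}, which yields $\mathbf V=\mathbf M\vee\mathbf N$ with $\mathbf M\in\{\mathbf T,\mathbf{SL}\}$ and $\mathbf N$ a nil-variety. Throughout I will use Lemma~\ref{+-SL} to pass freely between $\mathbf V$ and $\mathbf{SL}\vee\mathbf V$, since the two are simultaneously modular; this lets me enlarge $\mathbf V$ by the neutral atom $\mathbf{SL}$ whenever a witness for the modular law is easier to produce above $\mathbf{SL}$. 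The backbone of the argument is a sequence of applications of the modular law $(\mathbf V\vee\mathbf Y)\wedge\mathbf Z=(\mathbf V\wedge\mathbf Z)\vee\mathbf Y$ with $\mathbf Y\le\mathbf Z$ chosen near the bottom of $\mathbf{SEM}$, each designed so that its two sides disagree precisely when $\mathbf V$ contains a forbidden configuration.

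First I would recall that the atoms of $\mathbf{SEM}$ are exactly $\mathbf{SL}$, the variety $\mathbf{ZM}$ of null semigroups, the varieties $\mathbf{LZ}$ and $\mathbf{RZ}$ of left- and right-zero semigroups, and the varieties $\mathbf{A}_p$ of elementary abelian $p$-groups. Each of the latter four families must be excluded from a modular $\mathbf V\neq\mathbf{SEM}$. To rule out $\mathbf{A}_p$ I would use that a nontrivial group variety inside $\mathbf V$, paired with a suitable $\mathbf Y\le\mathbf Z$, makes the upper side of the modular identity pick up the group while the lower side cannot, forcing $\mathbf V=\mathbf{SEM}$; the same scheme, with left/right duals, excludes $\mathbf{LZ}$ and $\mathbf{RZ}$. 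An analogous witness involving the infinite cyclic semigroup shows that $\mathbf V$ is periodic. Together with the absence of nontrivial groups this gives that $\mathbf V$ is aperiodic, that the idempotents of every $S\in\mathbf V$ form a subsemilattice, and hence (periodicity plus the exclusion of $\mathbf{LZ},\mathbf{RZ}$ collapsing every archimedean kernel to a point) that each class of the least semilattice congruence on $S$ is a nilsemigroup.

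The remaining and decisive point is that this decomposition is \emph{separated}: one must show that the semilattice of idempotents splits off as a direct factor of the subdirect representation, i.e.\ that no $S\in\mathbf V$ admits an idempotent acting nontrivially on a nonzero nilpotent element. Concretely, one must forbid the three-element configuration $\{e,a,0\}$ with $e^2=e$, $ea=ae=a$ and $a^2=0$, which lies in every aperiodic semilattice-banded variety but in no variety of the form $\mathbf{SL}\vee\mathbf N$ (any homomorphism to a nilsemigroup sends $e$, and hence $a=ea$, to the zero, so it cannot separate $a$ from $0$). I would exclude it by the sharpest application of the modular law: taking $\mathbf Y=\mathbf{SL}$ and $\mathbf Z$ a small variety built from this configuration, and forcing $(\mathbf V\vee\mathbf Y)\wedge\mathbf Z$ and $(\mathbf V\wedge\mathbf Z)\vee\mathbf Y$ apart unless $S$ is already a subdirect product of a semilattice and a nilsemigroup, computing the two sides at the level of identities with Lemma~\ref{u=v in SL} controlling the $\mathbf{SL}$-contribution. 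This separation step is where I expect the main difficulty to lie, both in isolating the right witnesses and in verifying the joins and meets.

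Once separation is established, $\mathbf{SL}\vee\mathbf V=\mathbf{SL}\vee\mathbf N$ for the nil-variety $\mathbf N$ generated by the nilpotent classes, and the conclusion descends to $\mathbf V$ itself by neutrality of $\mathbf{SL}$: the sublattice generated by $\mathbf{SL}$, $\mathbf V$ and $\mathbf N$ is distributive, and since $\mathbf V\le\mathbf{SL}\vee\mathbf V=\mathbf{SL}\vee\mathbf N$ we get $\mathbf V=\mathbf V\wedge(\mathbf{SL}\vee\mathbf N)=(\mathbf V\wedge\mathbf{SL})\vee(\mathbf V\wedge\mathbf N)$. Writing $\mathbf N'=\mathbf V\wedge\mathbf N$, which is a nil-variety as a subvariety of $\mathbf N$, this reads $\mathbf V=\mathbf N'$ when $\mathbf V\wedge\mathbf{SL}=\mathbf T$ and $\mathbf V=\mathbf{SL}\vee\mathbf N'$ when $\mathbf V\supseteq\mathbf{SL}$, i.e.\ exactly the asserted dichotomy with $\mathbf M=\mathbf T$ or $\mathbf M=\mathbf{SL}$. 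In practice the proof assembles the exclusion conditions already isolated in~\cite{Jezek-McKenzie-93,Vernikov-07,Shaprynskii-12} with the separation argument and this final neutrality-based descent.
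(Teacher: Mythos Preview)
The paper does not prove this proposition; it is stated with a terminal \qed, and the surrounding paragraph attributes the result to \cite{Jezek-McKenzie-93} (in a weaker form and different terminology), to \cite{Vernikov-07} for the present formulation, and to \cite{Shaprynskii-12} for a direct self-contained proof. There is therefore no in-paper argument against which to compare your sketch.

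That said, your outline is in the right spirit and matches the overall shape of the arguments in the cited references: exclude the non-nil atoms ($\mathbf{LZ}$, $\mathbf{RZ}$, the group atoms $\mathbf A_p$) and nonperiodicity by exhibiting failures of the modular law, then separate the semilattice part, then descend via neutrality of $\mathbf{SL}$. But as written it is a plan rather than a proof. You never name a single witness pair $\mathbf Y\le\mathbf Z$ for any of the claimed modular-law failures, and you explicitly flag the separation step---ruling out the three-element semigroup $\{e,a,0\}$ with $e^2=e$, $ea=ae=a$, $a^2=0$---as the place where you ``expect the main difficulty to lie'' without carrying it out. These are precisely the substantive parts of the argument; the concluding neutrality-based descent is routine by comparison. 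If you intend a self-contained proof you must supply those constructions and verify the relevant joins and meets, which is exactly what \cite{Shaprynskii-12} does and what your own final sentence effectively defers to.
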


This proposition was proved (in slightly weaker form and in some other terminology) in~\cite[Proposition~1.6]{Jezek-McKenzie-93}. It was formulated in the form given here in~\cite[Proposition~2.1]{Vernikov-07}. A direct and transparent proof of Proposition~\ref{nec} not depending on a technique from~\cite{Jezek-McKenzie-93} is provided in~\cite{Shaprynskii-12}.

As we have already mentioned, commutative varieties that are modular elements in \textbf{SEM} are completely classified in~\cite[Theorem~3.1]{Vernikov-07}. We need the following consequence of this result that was verified in~\cite{Vernikov-Volkov-06}, in fact.

\begin{proposition}
\label{commut}
If a semigroup variety satisfies the identities $x^2y\approx 0$ and $xy\approx yx$ then it is a modular element in $\mathbf{SEM}$.\qed
\end{proposition}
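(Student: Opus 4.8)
The plan is to deduce this from the complete description of commutative semigroup varieties that are modular elements of $\mathbf{SEM}$, established in~\cite[Theorem~3.1]{Vernikov-07}. A variety $\mathbf V$ satisfying $x^2y\approx 0$ and $xy\approx yx$ is commutative, so it falls squarely within the scope of that classification, and the problem reduces to checking that every such $\mathbf V$ occurs among the modular varieties listed there. As a preliminary I would first unpack the hypothesis: substituting $y:=x$ into $x^2y\approx 0$ gives $x^3\approx 0$, and in general, in the presence of commutativity, any word carrying a letter of multiplicity at least two together with at least one further factor can be written as $c^2\mathbf w'$ with $\mathbf w'$ nonempty, hence collapses to $0$. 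Consequently $\mathbf V$ is a subvariety of the commutative nil-variety $\var\{x^2y\approx 0,\,xy\approx yx\}$, whose nonzero free-object elements are exactly the square-free commutative words together with the isolated squares $x^2$.

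With this structural picture in hand, the second step is simply to match $\mathbf V$ against the list in~\cite[Theorem~3.1]{Vernikov-07}: I expect every subvariety of $\var\{x^2y\approx 0,\,xy\approx yx\}$ to appear on the modular side of that classification. Carrying this matching out is essentially the content of~\cite{Vernikov-Volkov-06}, from which one may quote the relevant verification directly, so at this level of the argument there is no genuine difficulty.

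The substance, and the approach underlying the cited results, is the direct verification of the modular law. Since the modular inequality $(\mathbf V\wedge\mathbf Y)\vee\mathbf X\le(\mathbf V\vee\mathbf X)\wedge\mathbf Y$ holds in every lattice for $\mathbf X\le\mathbf Y$, it suffices to prove the reverse inclusion; for this I would take an arbitrary identity holding in $(\mathbf V\wedge\mathbf Y)\vee\mathbf X$ and show it holds in $(\mathbf V\vee\mathbf X)\wedge\mathbf Y$, working with explicit identity bases for the joins and meets of commutative nil-varieties. The main obstacle is exactly this lattice computation: one must simultaneously control the identities of $\mathbf V\vee\mathbf X$ and of the meets, exploiting the rigidity imposed by $x^2y\approx 0$ (which forces nonzero words to be nearly linear) to transport an identity from the right-hand side to the left. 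Commutativity is what keeps the combinatorics of words tractable here, and it is precisely the loss of this control in the non-commutative setting that makes Theorem~\ref{permut-3} and Proposition~\ref{modular not cancellable} subtle.
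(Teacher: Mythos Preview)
Your proposal is correct and aligns with the paper's own treatment: the paper does not prove Proposition~\ref{commut} at all but simply quotes it as a known result, noting it is a consequence of~\cite[Theorem~3.1]{Vernikov-07} that was in fact verified earlier in~\cite{Vernikov-Volkov-06}. One small historical correction: since~\cite{Vernikov-Volkov-06} predates~\cite{Vernikov-07}, the result there was established directly rather than by matching against the later classification, so your third paragraph (the direct verification of the modular law via deductions of identities) is closer to what actually happens in the cited source than the ``match against the list'' framing of your second paragraph.
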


We need also the following claim that is a part of~\cite[Theorem~4.5]{Vernikov-07}.

\begin{lemma}
\label{permut-3 nec}
Let $\mathbf V$ be a nil-variety of semigroups satisfying an identity of the form $x_1x_2x_3\approx x_{1\pi}x_{2\pi}x_{3\pi}$ where $\pi$ is a non-trivial permutation from $\mathbb S_3$. If $\mathbf V$ is a modular element of the lattice $\mathbf{SEM}$ then $\mathbf V$ satisfies also:
\begin{itemize}
\item[\textup{(i)}] all permutational identities of length~$4$;
\item[\textup{(ii)}] the identity $xy^2\approx 0$ whenever $\pi=(12)$;
\item[\textup{(iii)}] the identity $x^2y\approx 0$ whenever $\pi$ is one of the permutations $(13)$, $(23)$ or $(123)$.\qed
\end{itemize}
\end{lemma}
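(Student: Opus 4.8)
The plan is to prove the contrapositive of each of (i)--(iii): assuming that $\mathbf V$ violates the identity in question, I would exhibit varieties $\mathbf Y\subseteq\mathbf Z$ for which $(\mathbf V\vee\mathbf Y)\wedge\mathbf Z\neq(\mathbf V\wedge\mathbf Z)\vee\mathbf Y$, contradicting the modularity of $\mathbf V$. All varieties entering the argument are nil-varieties (up to a harmless copy of $\mathbf{SL}$, which by Lemma~\ref{+-SL} and the neutrality of $\mathbf{SL}$ may be adjoined or discarded freely), hence each is determined by its identities, and I would compute the relevant joins and meets at the level of words: an identity holds in a join iff it holds in both factors, while a meet is governed by the union of the two identity systems. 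The separating identity that witnesses the strict inequality will in each case be built from the very word whose failure is assumed, typically of the form $\mathbf u\approx0$.

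The first step is to record the consequences of the hypothesis $x_1x_2x_3\approx x_{1\pi}x_{2\pi}x_{3\pi}$ obtained by substitution alone, with no appeal to modularity. Replacing a single letter by a two-letter block (and dually) turns the length-$3$ identity into several permutational identities of length $4$; a direct computation shows that the permutations so produced generate only a proper subgroup of $\mathbb S_4$ --- a point-stabilizer $\mathbb S_3$ when $\pi$ is a transposition, and the cyclic group $\langle(1234)\rangle$ when $\pi=(123)$. Consequently (i) reduces to forcing $\mathbf V$ to satisfy one further permutational identity whose permutation lies outside the subgroup already obtained; because $\mathbb S_3$ is maximal in $\mathbb S_4$, and because a $4$-cycle together with a suitable transposition generates $\mathbb S_4$, a single such identity then yields all of $\mathbb S_4$. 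I would also remark here that $\pi=(132)$ needs no separate treatment, as $xyz\approx yzx$ is equivalent to $xyz\approx zxy$, and that left--right duality pairs the cases $(12)$ with $(23)$ and leaves $(13)$ self-dual, trimming the casework.

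It remains to force the genuinely new identities, and this is where modularity enters and where the real work lies. For (i), assuming $\mathbf V\not\models\mathbf p\approx\mathbf q$ for the missing length-$4$ permutational identity, I would take $\mathbf Z$ to be $\mathbf V$ truncated so as to annihilate all words longer than $4$ and to equate $\mathbf p$ with $0$, and $\mathbf Y$ a minimal refinement below $\mathbf Z$ arranged so that $\mathbf p$ survives in $\mathbf V\vee\mathbf Y$ while $\mathbf p\approx0$ holds in $(\mathbf V\wedge\mathbf Z)\vee\mathbf Y$; the resulting discrepancy breaks the modular law. For (ii) and (iii) I would run the same scheme with the non-linear words $xy^2$ (for $\pi=(12)$) and $x^2y$ (for the remaining $\pi$), combining the already-available permutational consequences (for instance $xyy\approx yxy$ when $\pi=(12)$) with witness nil-varieties that isolate the offending square. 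The main obstacle is precisely this construction: selecting $\mathbf Y$ and $\mathbf Z$ so that $\mathbf V\vee\mathbf Y$, $\mathbf V\wedge\mathbf Z$, and both composite sides of the modular law admit explicit word-set descriptions, checking the inclusion $\mathbf Y\subseteq\mathbf Z$, and verifying that a single identity separates the two sides. The permutation-group bookkeeping of the second paragraph is only the easy skeleton dictating which identities must be forced; essentially all the difficulty is concentrated in getting these small witness varieties and their meets and joins right.
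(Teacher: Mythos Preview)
The paper does not prove this lemma at all: it is quoted verbatim as a part of \cite[Theorem~4.5]{Vernikov-07} and closed with a \qed. So there is no ``paper's own proof'' to compare against; your proposal is an attempt to reprove an imported result.

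Your overall strategy---assume the identity fails, build $\mathbf Y\subseteq\mathbf Z$ so that the modular law breaks at $\mathbf V$---is the standard route and is almost certainly how the original argument in \cite{Vernikov-07} goes. But two things are off.

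First, the permutation-group bookkeeping is wrong in the $(123)$ case (and by the same token for $(13)$). You only allow ``replacing a single letter by a two-letter block,'' which from $xyz\approx yzx$ indeed yields $(1234)$ and $(13)(24)$ and hence only the cyclic group $\langle(1234)\rangle$. But in a variety you may also apply the length-$3$ identity to a length-$3$ \emph{subword} of a length-$4$ word: from $x_1x_2x_3x_4$ this gives $x_2x_3x_1x_4$ and $x_1x_3x_4x_2$, i.e.\ the $3$-cycles $(123)$ and $(234)$. Combining $(1234)$ with $(123)$ already produces the transposition $(34)$, hence all of $\mathbb S_4$. This is exactly Poll\'ak's result recorded in the paper as Lemma~\ref{permut conseq}\,1(iii): for $\pi\in\{(13),(123),(132)\}$ one has $\Perm_n(\mathbf V)=\mathbb S_n$ for every $n\ge4$ \emph{without} any appeal to modularity. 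So item~(i) is automatic for these $\pi$, and the modular-law witnesses are needed only for $\pi\in\{(12),(23)\}$. Your reduction ``a $4$-cycle together with a suitable transposition generates $\mathbb S_4$'' is therefore aimed at a case that does not exist.

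Second, what remains is genuinely the hard part, and your proposal does not do it. For $\pi=(12)$ (dually $(23)$) you must force both the missing length-$4$ permutation outside $\Stab_4(4)$ and the identity $xy^2\approx0$; for $\pi\in\{(13),(123)\}$ you must force $x^2y\approx0$. In each case the content of the proof is the explicit choice of the nil-varieties $\mathbf Y\subseteq\mathbf Z$ and the verification of the four varieties $\mathbf V\vee\mathbf Y$, $\mathbf V\wedge\mathbf Z$, $(\mathbf V\vee\mathbf Y)\wedge\mathbf Z$, $(\mathbf V\wedge\mathbf Z)\vee\mathbf Y$ at the word level. You describe this only schematically (``$\mathbf Z$ to be $\mathbf V$ truncated \dots'', ``a minimal refinement below $\mathbf Z$'') and explicitly flag it as the main obstacle. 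That is an outline of intent, not a proof; the lemma stands or falls on precisely these constructions, which you have not supplied.
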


For a natural number $n$ and a semigroup variety \textbf V, we denote by $\Perm_n(\mathbf V)$ the set of all permutations $\pi\in\mathbb S_n$ such that \textbf V satisfies the identity~\eqref{permut id}. Clearly, $\Perm_n(\mathbf V)$ is a subgroup in $\mathbb S_n$. If $1\le i\le n$ then we denote by $\Stab_i(n)$ the set of all permutations $\pi\in\mathbb S_n$ with $i\pi=i$. Obviously, $\Stab_i(n)$ also is a subgroup in $\mathbb S_n$. Moreover, it is well known that $\Stab_i(n)$ is a maximal proper subgroup in $\mathbb S_n$. We need the following partial cases of results of the article~\cite{Pollak-73}.

\begin{lemma}
\label{permut conseq}
Let $\mathbf V$ be a semigroup variety.
\begin{itemize}
\item[\textup{1)}] If $\mathbf V$ satisfies a non-trivial identity of the form $x_1x_2x_3\approx x_{1\pi}x_{2\pi}x_{3\pi}$ and $n\ge 4$ then
\begin{itemize}
\item[\textup{(i)}] $\Perm_n(\mathbf V)\supseteq\Stab_n(n)$ whenever $\pi=(12)$;
\item[\textup{(ii)}] $\Perm_n(\mathbf V)\supseteq\Stab_n(1)$ whenever $\pi=(23)$;
\item[\textup{(iii)}] $\Perm_n(\mathbf V)=\mathbb S_n$ otherwise.
\end{itemize}
\item[\textup{2)}] If $\mathbf V$ satisfies the identity $xyzt\approx xzty$ and $n\ge 5$ then $\Perm_n(\mathbf V)\supseteq\Stab_n(1)$.\qed
\end{itemize}
\end{lemma}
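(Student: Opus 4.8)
The plan is to obtain both parts directly from the defining identities, using only two elementary ways of producing consequence identities: substituting an arbitrary word for a variable, and multiplying both sides of a valid identity by a fixed word on the left or on the right. Given a permutational identity of length $3$ (resp. $4$) that holds in $\mathbf V$ and a target length $n$, these two operations let me split the positions $\{1,2,\dots,n\}$ into three (resp. four) consecutive non-empty blocks, attach an arbitrary fixed head and tail around the part that is permuted, and thereby obtain a consequence identity of length $n$ realizing a prescribed permutation $\sigma\in\Perm_n(\mathbf V)$. Since $\Perm_n(\mathbf V)$ is a subgroup of $\mathbb S_n$, it suffices in each case to produce enough such permutations to generate the asserted subgroup.

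Consider Part~1 with $\pi=(12)$, so that $\mathbf V$ satisfies $x_1x_2x_3\approx x_2x_1x_3$. Substituting $x_1:=x_i$, $x_2:=x_{i+1}$, $x_3:=x_{i+2}\cdots x_n$ and left-multiplying by $x_1\cdots x_{i-1}$ yields the adjacent transposition $(i,i+1)$ for each $1\le i\le n-2$, the tail being non-empty precisely when $i\le n-2$. These transpositions generate $\operatorname{Sym}\{1,\dots,n-1\}=\Stab_n(n)$. The case $\pi=(23)$ is the mirror image: single-letter substitutions with a non-empty head give $(i,i+1)$ for $2\le i\le n-1$, generating $\operatorname{Sym}\{2,\dots,n\}$, the stabilizer $\Stab_1(n)$ of the point $1$.

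For the remaining permutations I first generate a large subgroup from single-letter blocks and then leave it using one block of size $2$. For $\pi=(123)$ or $\pi=(132)$ the single-letter substitutions give $3$-cycles supported on the consecutive triples $\{i,i+1,i+2\}$, $1\le i\le n-2$, which generate the alternating group $A_n$; for $\pi=(13)$ they give the transpositions $(i,i+2)$, $1\le i\le n-2$, which generate $\operatorname{Sym}(O)\times\operatorname{Sym}(E)$, where $O$ and $E$ are the sets of odd and even positions. In either case a substitution whose last block has length $2$ produces a $4$-cycle that is odd and carries a position of one parity to a position of the other; a short computation shows it lies outside the subgroup already obtained, and a routine generation argument then gives $\Perm_n(\mathbf V)=\mathbb S_n$. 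I expect this passage from the alternating (or parity-preserving) subgroup to the whole of $\mathbb S_n$ to be the only genuine obstacle; it is exactly what compels the use of a block longer than one letter, and hence the hypothesis $n\ge 4$.

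Part~2 runs along the same lines applied to $xyzt\approx xzty$, whose associated permutation is a $3$-cycle fixing the point $1$. Substituting single letters inside a non-empty head produces $3$-cycles on the consecutive triples contained in $\{2,\dots,n\}$, which generate the alternating group on $\{2,\dots,n\}$, while a substitution with a final block of length $2$ produces an odd $4$-cycle fixing $1$. The latter needs a window of four positions lying strictly to the right of position $1$, which is available exactly when $n\ge 5$; together these permutations generate $\operatorname{Sym}\{2,\dots,n\}=\Stab_1(n)$, as required.
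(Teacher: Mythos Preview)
The paper does not prove this lemma at all: it is stated with a terminal \qed\ and introduced as ``partial cases of results of the article~\cite{Pollak-73}''. Your direct verification is therefore an addition rather than a comparison, and the strategy---producing generators of $\Perm_n(\mathbf V)$ by substituting blocks for the variables of the length-$3$ (resp.\ length-$4$) identity and padding with a head and/or tail---is exactly how such results are obtained and is sound.

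There is, however, one concrete slip. For $\pi=(132)$ the identity reads $x_1x_2x_3\approx x_3x_1x_2$; substituting a block of length~$2$ for the \emph{last} variable and extending gives
\[
x_1x_2x_3x_4\approx x_3x_4x_1x_2,
\]
whose associated permutation is $(13)(24)$, an even double transposition rather than an odd $4$-cycle. So in this case your chosen substitution does not leave $A_n$. The repair is immediate: either observe that $(132)=(123)^2$, so that any variety satisfying $p_3[(132)]$ also satisfies $p_3[(123)]$ and the case reduces to the previous one, or instead make the \emph{first} (or middle) block of length~$2$, which yields the odd $4$-cycle $(1432)$.

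One smaller remark on the $(13)$ case. The subgroup $\operatorname{Sym}(O)\times\operatorname{Sym}(E)$ is not maximal in $\mathbb S_n$, so the fact that $(1324)$ lies outside it is not by itself enough; your ``routine generation argument'' must actually be carried out. One clean way: $(24)(13)(1324)=(12)$, and conjugating $(12)$ by $\operatorname{Sym}(O)\times\operatorname{Sym}(E)$ produces every transposition $(o,e)$ with $o$ odd and $e$ even; together with the transpositions already in $\operatorname{Sym}(O)\times\operatorname{Sym}(E)$ this gives all transpositions and hence $\mathbb S_n$.
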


If $\mathbf a\in F$ then we denote by $\ell(\mathbf a)$ the length of the word \textbf a. The following lemma readily follows from the proof of~\cite[Lemma~1]{Sapir-Sukhanov-81}.

\begin{lemma}
\label{x_1x_2...x_n=w}
If a nil-variety of semigroups satisfies an identity of the form $x_1x_2\cdots x_n\approx\mathbf w$ where $\ell(\mathbf w)\ne n$ then it satisfies also the identity $x_1x_2\cdots x_n\approx 0$.\qed
\end{lemma}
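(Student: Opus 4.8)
The plan is to show that, under the hypotheses, the word $\mathbf a=x_1x_2\cdots x_n$ becomes absorbing, i.e.\ $\mathbf V\models\mathbf a\approx 0$. The basic tool is the following remark about nil-varieties. The relatively free semigroup of $\mathbf V$ over a countable alphabet lies in $\mathbf V$ and hence is a nilsemigroup; in particular its free generator $x$ satisfies $x^k=0$ for some $k$, and since the zero of a nilsemigroup is absorbing this means that $\mathbf V\models x^k\approx 0$. Substituting $x$ by an arbitrary nonempty word $\mathbf v$ we conclude that $\mathbf V\models\mathbf v^k\approx 0$; thus \emph{every word containing a nonempty $k$-th power as a factor equals $0$ in $\mathbf V$}. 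This will be the only ``source of zeros'' used in the argument.

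First I would dispose of the letters. If some letter occurs in $\mathbf w$ but not in $\mathbf a$, substitute it by $x^k$ while keeping the letters $x_1,\dots,x_n$ fixed: the left-hand side $\mathbf a$ is unchanged, whereas the right-hand side acquires the factor $x^k$ and so equals $0$, giving $\mathbf a\approx 0$ at once. Symmetrically, if some $x_j$ is missing from $\mathbf w$, the substitution $x_j\mapsto x^k$ forces $\mathbf a\approx 0$ (now it is the left-hand side that carries the factor $x^k$) and hence $\mathbf w\approx 0$. Therefore it remains to treat the case $\con(\mathbf w)=\con(\mathbf a)=\{x_1,\dots,x_n\}$. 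Then $\ell(\mathbf w)\ge n$, and as $\ell(\mathbf w)\ne n$ we in fact have $\ell(\mathbf w)>n$, so by the pigeonhole principle some letter occurs at least twice in $\mathbf w$. Note also that substituting every $x_i$ by one and the same letter $t$ turns $\mathbf a\approx\mathbf w$ into $t^n\approx t^{\ell(\mathbf w)}$, and the structure of the monogenic nilsemigroup in $\mathbf V$ forces $k\le n$.

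It remains to derive $\mathbf a\approx 0$ in this last case, and this is the heart of the matter. The identity $\mathbf a\approx\mathbf w$ may be used as a two-sided rewriting rule: inside any word, a linear factor $y_1y_2\cdots y_n$ (a block of $n$ pairwise distinct letters) may be replaced by $\mathbf w(y_1,\dots,y_n)$ and vice versa. Combining such rewritings, which increase the length and preserve the content, with merging substitutions that collapse suitable letters to a common one (so that the surplus occurrences produced by a repeated letter pile up into a long power), one accumulates, in a word equal in $\mathbf V$ to $\mathbf a$, a nonempty $k$-th power as a factor; that word then equals $0$, whence $\mathbf a\approx 0$. This combinatorial bookkeeping --- choosing which factors to rewrite and which letters to merge so that a $k$-th power is guaranteed to appear after finitely many steps --- is precisely the technique of the proof of~\cite[Lemma~1]{Sapir-Sukhanov-81}, and it is the only genuinely delicate point; the reductions above are routine.
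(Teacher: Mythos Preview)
The paper provides no argument for this lemma; it simply records that the claim ``readily follows from the proof of~\cite[Lemma~1]{Sapir-Sukhanov-81}'' and ends with \qed. Your proposal is therefore already more detailed than the paper's treatment, and at the decisive point---manufacturing a $k$-th power factor by iterated rewriting---you too defer to the same reference. So, approach-wise, there is nothing to contrast: both routes are ``see Sapir--Sukhanov''.

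Your preliminary reductions are sound, with one wording slip. In the case where some $x_j$ is absent from $\mathbf w$, the substitution $x_j\mapsto x^k$ does not literally ``force $\mathbf a\approx 0$'': it turns the left-hand side into a word carrying the factor $x^k$, hence equal to~$0$, while the right-hand side remains $\mathbf w$. What you obtain directly is $\mathbf w\approx 0$, and then the original identity $\mathbf a\approx\mathbf w$ yields $\mathbf a\approx 0$. You have the two conclusions written in the wrong order. Also, the remark that $k\le n$ (from $t^n\approx t^{\ell(\mathbf w)}$) is correct but not actually used anywhere in your sketch, so you could drop it.
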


If $\mathbf a,\mathbf b\in F$ and \textbf b may be obtained from \textbf a by renaming of letters then we say that the words \textbf a and \textbf b are \emph{similar}. 

\begin{lemma}
\label{Perm_n}
Let $\mathbf V_1$ and $\mathbf V_2$ be semigroup varieties and $n$ be a natural number. Then:
\begin{itemize}
\item[\textup{(i)}] $\Perm_n(\mathbf V_1\vee\mathbf V_2)=\Perm_n(\mathbf V_1)\wedge\Perm_n(\mathbf V_2)$;
\item[\textup{(ii)}] if $\mathbf V_1$ and $\mathbf V_2$ are nil-varieties then $\Perm_n(\mathbf V_1\wedge\mathbf V_2)=\Perm_n(\mathbf V_1)\vee\Perm_n(\mathbf V_2)$.
\end{itemize}
\end{lemma}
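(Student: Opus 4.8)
The plan is to settle part~(i) formally and to reduce part~(ii) to a combinatorial statement about equational deductions between rearrangements of the word $x_1x_2\cdots x_n$. For part~(i): an identity holds in $\mathbf V_1\vee\mathbf V_2$ if and only if it holds in both $\mathbf V_1$ and $\mathbf V_2$, and applying this to permutational identities of length~$n$ gives $\Perm_n(\mathbf V_1\vee\mathbf V_2)=\Perm_n(\mathbf V_1)\cap\Perm_n(\mathbf V_2)$, which is the meet of the two subgroups of $\mathbb S_n$. For part~(ii), the inclusion $\Perm_n(\mathbf V_1)\vee\Perm_n(\mathbf V_2)\subseteq\Perm_n(\mathbf V_1\wedge\mathbf V_2)$ is immediate from monotonicity of $\Perm_n$: since $\mathbf V_1\wedge\mathbf V_2\subseteq\mathbf V_i$, the subgroup $\Perm_n(\mathbf V_1\wedge\mathbf V_2)$ of $\mathbb S_n$ contains both $\Perm_n(\mathbf V_1)$ and $\Perm_n(\mathbf V_2)$, hence their join. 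The substance is the reverse inclusion.

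The heart of the argument is the following fact about nil-varieties. \emph{Let $\mathbf V$ be a nil-variety with $\mathbf V\not\models x_1x_2\cdots x_n\approx 0$, and let $\mathbf V\models\mathbf a\approx\mathbf b$ where $\mathbf a$ has length~$n$ and each letter of $\mathbf a$ occurs in $\mathbf a$ exactly once. Then $\mathbf b$ is a rearrangement of $\mathbf a$.} To prove it, fix $p$ with $\mathbf V\models x^p\approx 0$; by Lemma~\ref{x_1x_2...x_n=w} we have $\ell(\mathbf b)=n$, so if $\mathbf b$ is not a rearrangement of $\mathbf a$ then $\con(\mathbf b)\ne\con(\mathbf a)$ and there is a letter $w$ belonging to exactly one of $\con(\mathbf a),\con(\mathbf b)$. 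The substitution $w\mapsto w^p$ turns that one of $\mathbf a,\mathbf b$ into a word containing $w^p$ as a factor, hence equal to~$0$ in $\mathbf V$, while leaving the other word untouched; together with $\mathbf V\models\mathbf a\approx\mathbf b$ and the multilinearity of $\mathbf a$ this forces $\mathbf V\models x_1x_2\cdots x_n\approx 0$, a contradiction.

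I would use this fact twice. First, it gives a ``distributivity'' statement: for nil-varieties $\mathbf V_1\wedge\mathbf V_2\models x_1x_2\cdots x_n\approx 0$ if and only if $\mathbf V_i\models x_1x_2\cdots x_n\approx 0$ for some $i$. Indeed, $\mathbf V_1\wedge\mathbf V_2$ is axiomatized by the identities valid in $\mathbf V_1$ together with those valid in $\mathbf V_2$, so every identity of $\mathbf V_1\wedge\mathbf V_2$ has a deduction in which each step applies an identity valid in $\mathbf V_1$ or one valid in $\mathbf V_2$; if neither $\mathbf V_i$ satisfies $x_1x_2\cdots x_n\approx 0$, then, applying the fact at each step, any such deduction issuing from $x_1x_2\cdots x_n$ stays inside the set of rearrangements of $x_1x_2\cdots x_n$ and so never reaches a word equal to~$0$. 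This disposes of the case $\mathbf V_1\wedge\mathbf V_2\models x_1x_2\cdots x_n\approx 0$ (then some $\Perm_n(\mathbf V_i)$, and hence both sides of~(ii), equals $\mathbb S_n$), so we may assume $\mathbf V_1\wedge\mathbf V_2\not\models x_1x_2\cdots x_n\approx 0$, whence neither $\mathbf V_i$ does either. Now take $\pi\in\Perm_n(\mathbf V_1\wedge\mathbf V_2)$ and a deduction $x_1x_2\cdots x_n=\mathbf c_0,\mathbf c_1,\dots,\mathbf c_m=x_{1\pi}x_{2\pi}\cdots x_{n\pi}$ of the above kind. By the fact, each $\mathbf c_\ell$ is a rearrangement of $x_1x_2\cdots x_n$, say $\mathbf c_\ell=x_{1\sigma_\ell}x_{2\sigma_\ell}\cdots x_{n\sigma_\ell}$ with $\sigma_0$ the identity permutation and $\sigma_m=\pi$; the $\ell$-th step makes $\mathbf c_\ell\approx\mathbf c_{\ell+1}$ valid in some $\mathbf V_i$, and renaming its letters according to $\sigma_\ell^{-1}$ turns this into a permutational identity of length~$n$ valid in $\mathbf V_i$, so $\sigma_{\ell+1}\sigma_\ell^{-1}\in\Perm_n(\mathbf V_1)\cup\Perm_n(\mathbf V_2)$. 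Therefore $\pi$ is a product of such elements, i.e. $\pi\in\Perm_n(\mathbf V_1)\vee\Perm_n(\mathbf V_2)$, as required.

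I expect the nil-variety fact, together with the verification that it legitimately controls deductions, to be the main obstacle; a minor point to check along the way is that a deduction of an identity of $\mathbf V_1\wedge\mathbf V_2$ may be taken so that every single step replaces a subword via an instance of an identity valid in one of $\mathbf V_1,\mathbf V_2$, which is what makes each consecutive pair of words in the deduction equivalent over that $\mathbf V_i$. Once this is in place, the completeness theorem for equational logic converts membership in $\Perm_n(\mathbf V_1\wedge\mathbf V_2)$ into such a deduction, the fact confines the deduction to rearrangements of $x_1x_2\cdots x_n$, and reading off the permutations expresses $\pi$ as a product of permutations drawn from the two subgroups.
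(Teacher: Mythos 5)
Your proof is correct and follows essentially the same route as the paper's: part~(i) is the standard observation about joins, and for part~(ii) both arguments take a deduction of $x_1x_2\cdots x_n\approx x_{1\pi}x_{2\pi}\cdots x_{n\pi}$ from identities of $\mathbf V_1$ and $\mathbf V_2$, use Lemma~\ref{x_1x_2...x_n=w} together with a substitution killing a letter of discrepant content to confine every word of the deduction to rearrangements of $x_1x_2\cdots x_n$, and then read off the step permutations. The only organisational difference is that the paper obtains this confinement via a shortest-deduction argument, while you split off the degenerate case up front; here your auxiliary ``distributivity'' claim is both unnecessary (it suffices to split on whether some $\mathbf V_i$ satisfies $x_1\cdots x_n\approx 0$, in which case $\Perm_n(\mathbf V_i)=\Perm_n(\mathbf V_1\wedge\mathbf V_2)=\mathbb S_n$) and, as phrased, slightly off, since a deduction of the identity $x_1\cdots x_n\approx 0$ issues from the length-$(n+1)$ word $x_1\cdots x_ny$ rather than from $x_1\cdots x_n$, so your ``fact'' would have to be applied to the deduction read from the short side.
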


\begin{proof}
For brevity, we will denote the identity~\eqref{permut id} by $p_n[\pi]$. If \textbf a is a word and $\pi$ is a permutation on the set $\con(\mathbf a)$ then we denote by $\pi[\mathbf a]$ the word that is obtained from \textbf a by the substitution $x\mapsto\pi(x)$ for every letter $x\in \con(\mathbf a)$.

\smallskip

(i) Let $\pi\in\Perm_n(\mathbf V_1\vee\mathbf V_2)$. Then the variety $\mathbf V_1\vee\mathbf V_2$ satisfies the identity $p_n[\pi]$. Hence this identity holds in both the varieties $\mathbf V_1$ and $\mathbf V_2$. Therefore, $\pi\in\Perm_n(\mathbf V_1)$ and $\pi\in\Perm_n(\mathbf V_2)$, whence $\pi\in\Perm_n(\mathbf V_1)\wedge\Perm_n(\mathbf V_2)$. Thus, $\Perm_n(\mathbf V_1\vee\mathbf V_2)\subseteq\Perm_n(\mathbf V_1)\wedge\Perm_n(\mathbf V_2)$. Suppose now that $\pi\in\Perm_n(\mathbf V_1)\wedge\Perm_n(\mathbf V_2)$. Then the identity $p_n[\pi]$ holds in both the varieties $\mathbf V_1$ and $\mathbf V_2$. Therefore, it holds in $\mathbf V_1\vee\mathbf V_2$. Thus, $\pi\in\Perm_n(\mathbf V_1\vee\mathbf V_2)$ and $\Perm_n(\mathbf V_1)\wedge\Perm_n(\mathbf V_2)\subseteq\Perm_n(\mathbf V_1\vee\mathbf V_2)$. This implies the required equality. 
 
\smallskip

(ii) Let $\pi\in\Perm_n(\mathbf V_1)\vee\Perm_n(\mathbf V_2)$. Then there is a sequence of permutations $\pi_1,\pi_2,\dots,\pi_m$ such that $\pi=\pi_1\pi_2\cdots \pi_m$ and, for each $i=1,2,\dots,m$, the permutation $\pi_i$ lies in one of the groups $\Perm_n(\mathbf V_1)$ or $\Perm_n(\mathbf V_2)$. Put $\mathbf u_0=x_1x_2\cdots x_n$. For each $i=1,2,\dots,m$, we define by induction the word $\mathbf u_i$ by the equality $\mathbf u_i=\pi_i[\mathbf u_{i-1}]$. It is clear that $\mathbf u_m=\pi[\mathbf u_0]$ and the sequence of words $\mathbf u_0$, $\mathbf u_1$, \dots, $\mathbf u_m$ is a deduction of the identity $p_n[\pi]$ from identities of the varieties $\mathbf V_1$ and $\mathbf V_2$. This means that $\pi\in\Perm_n(\mathbf V_1\wedge\mathbf V_2)$. Thus, $\Perm_n(\mathbf V_1)\vee\Perm_n(\mathbf V_2)\subseteq\Perm_n(\mathbf V_1\wedge\mathbf V_2)$.

It remains to verify that $\Perm_n(\mathbf V_1\wedge\mathbf V_2)\subseteq\Perm_n(\mathbf V_1)\vee\Perm_n(\mathbf V_2)$. Let $\pi\in\Perm_n(\mathbf V_1\wedge\mathbf V_2)$. Then the identity $p_n[\pi]$ holds in the variety $\mathbf V_1\wedge\mathbf V_2$. Let $\mathbf u_0$, $\mathbf u_1$, \dots, $\mathbf u_m$ be a deduction of this identity from identities of the varieties $\mathbf V_1$ and $\mathbf V_2$. In other words, $\mathbf u_0=x_1x_2\cdots x_n$, $\mathbf u_m=x_{1\pi}x_{2\pi}\cdots x_{n\pi}$ and, for each $i=0,1,\dots,m-1$, the identity $\mathbf u_i\approx\mathbf u_{i+1}$ holds in either $\mathbf V_1$ or $\mathbf V_2$. We will assume that $\mathbf u_0$, $\mathbf u_1$, \dots, $\mathbf u_m$ is the shortest sequence of words with the mentioned properties. Suppose that there is an index $i$ such that either $\mathbf u_i$ is non-linear or $\con(\mathbf u_i)\ne\{x_1,x_2,\dots,x_n\}$. Let $i$ be the least index with such a property. Clearly, $0<i<m$. The word $\mathbf u_{i-1}$ is similar to $x_1x_2\cdots x_n$. If $\con(\mathbf u_i)\ne\{x_1,x_2,\dots,x_n\}$ then there is a letter $x_j$ that occurs in one of the words $\mathbf u_{i-1}$ or $\mathbf u_i$ but does not occur in another one. The identity $\mathbf u_{i-1}\approx\mathbf u_i$ holds in one of the varieties $\mathbf V_1$ or $\mathbf V_2$, say, in $\mathbf V_1$. One can substitute~0 for $x_j$ in $\mathbf u_{i-1}\approx\mathbf u_i$. We obtain that $x_1x_2\cdots x_n\approx 0$ holds in $\mathbf V_1$, whence $\mathbf V_1$ satisfies the identity $p_n[\pi]$. In view of Lemma~\ref{x_1x_2...x_n=w}, the same is true whenever $\con(\mathbf u_i)=\{x_1,x_2,\dots,x_n\}$ but $\mathbf u_i$ is non-linear. But then the sequence of words $\mathbf u_0$, $\mathbf u_1$, \dots, $\mathbf u_{i-1}$, $\mathbf u_m$ is a deduction of the identity $p_n[\pi]$ from identities of the varieties $\mathbf V_1$ and $\mathbf V_2$ shorter than the deduction $\mathbf u_0$, $\mathbf u_1$, \dots, $\mathbf u_m$. Therefore, the words $\mathbf u_0$, $\mathbf u_1$, \dots, $\mathbf u_m$ are linear and $\con(\mathbf u_i)=\{x_1,x_2,\dots,x_n\}$ for all $i=0,1,\dots,m$. Hence there are permutations $\pi_1,\pi_2,\dots,\pi_m\in\mathbb S_n$ such that $\mathbf u_i=\pi_i[\mathbf u_{i-1}]$ for all $i=1,2,\dots,m$ and each of the permutations $\pi_1,\pi_2,\dots,\pi_m$ lies in either $\Perm_n(\mathbf V_1)$ or $\Perm_n(\mathbf V_2)$. Clearly, $\pi=\pi_1\pi_2\cdots \pi_m$, whence $\pi\in\Perm_n(\mathbf V_1)\vee\Perm_n(\mathbf V_2)$. 
\end{proof}

To verify Proposition~\ref{modular not cancellable}, we need the information about the structure of the subvariety lattice of the group $\mathbb S_3$. It is generally known and easy to check that this lattice has the form shown in Fig.~\ref{Sub(S_3)}. Here $\mathbb T$ is the trivial group and $\gr\{\pi\}$ denotes the subgroup of $\mathbb S_3$ generated by the permutation $\pi$.

\begin{figure}[tbh]
\begin{center}
\unitlength=1mm
\special{em:linewidth 0.4pt}
\linethickness{0.4pt}
\begin{picture}(62,36)
\put(1,18){\circle*{1.33}}
\put(21,18){\circle*{1.33}}
\put(31,3){\circle*{1.33}}
\put(31,33){\circle*{1.33}}
\put(41,18){\circle*{1.33}}
\put(61,18){\circle*{1.33}}
\put(1,18){\line(2,1){30}}
\put(1,18){\line(2,-1){30}}
\put(21,18){\line(2,3){10}}
\put(21,18){\line(2,-3){10}}
\put(31,3){\line(2,3){10}}
\put(31,3){\line(2,1){30}}
\put(31,33){\line(2,-3){10}}
\put(31,33){\line(2,-1){30}}
\put(0,18){\makebox(0,0)[rc]{$\gr\{(12)\}$}}
\put(20,18){\makebox(0,0)[rc]{$\gr\{(13)\}$}}
\put(42,18){\makebox(0,0)[lc]{$\gr\{(23)\}$}}
\put(62,18){\makebox(0,0)[lc]{$\gr\{(123)\}$}}
\put(31,36){\makebox(0,0)[cc]{$\mathbb S_3$}}
\put(31,0){\makebox(0,0)[cc]{$\mathbb T$}}
\end{picture}
\caption{The subgroup lattice of the group $\mathbb S_3$}
\label{Sub(S_3)}
\end{center}
\end{figure}
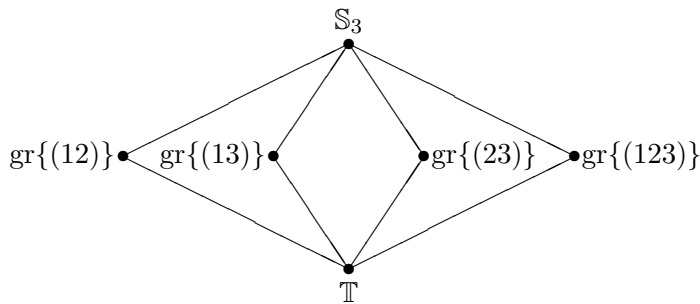

\section{The proof of the main results}
\label{proof}

\begin{proof}[Proof of Theorem~\ref{permut-3}.]
\emph{Necessity} immediately follows from Proposition~\ref{nec} and Lemma~\ref{permut-3 nec}.

\medskip

\emph{Sufficiency}. Let $\mathbf V=\mathbf{M\vee N}$ where $\mathbf M$ is one of the varieties $\mathbf T$ or $\mathbf{SL}$, while the variety $\mathbf N$ satisfies one of the identity systems~\eqref{xyz=zyx,xxy=0}--\eqref{xyz=xzy,xxy=0}. We need to verify that the variety $\mathbf V$ is a modular element in the lattice \textbf{SEM}. In view of Lemma~\ref{+-SL}, we may assume that $\mathbf M=\mathbf{SL}$, so $\mathbf V=\mathbf{SL\vee N}$. The same lemma shows that it suffices to prove that \textbf V is a modular element of the coideal $[\mathbf{SL})$ of the lattice \textbf{SEM}. In other words, we have to check that
$$
\mathbf{(V\vee Y)\wedge Z}=\mathbf{(V\wedge Z)\vee Y}
$$
for arbitrary varieties \textbf Y and \textbf Z with $\mathbf{SL\subseteq Y\subseteq Z}$. Moreover, it suffices to verify that
$$
\mathbf{(V\vee Y)\wedge Z\subseteq(V\wedge Z)\vee Y}
$$
because the opposite inclusion is evident. Thus, we need to prove that if a non-trivial identity $\mathbf{u\approx v}$ holds in $\mathbf{(V\wedge Z)\vee Y}$ then it holds in $\mathbf{(V\vee Y)\wedge Z}$ too.

So, let $\mathbf{SL\subseteq Y\subseteq Z}$ and $\mathbf{u\approx v}$ be a non-trivial identity that holds in the variety $\mathbf{(V\wedge Z)\vee Y}$. Then $\mathbf{u\approx v}$ holds in \textbf Y and there is a deduction of this identity from identities of the varieties \textbf V and \textbf Z, that is, a sequence of words
\begin{equation}
\label{deduct}
\mathbf w_0,\mathbf w_1,\dots,\mathbf w_m
\end{equation}
such that $\mathbf w_0=\mathbf u$, $\mathbf w_m=\mathbf v$ and, for each $i=0,1,\dots,m-1$, the identity $\mathbf{w_i\approx w_{i+1}}$ holds in one of the varieties \textbf V or \textbf Z. We will assume that~\eqref{deduct} is the shortest deduction of $\mathbf{u\approx v}$ from identities of \textbf V and \textbf Z. In particular, this means that the words $\mathbf w_0,\mathbf w_1,\dots,\mathbf w_m$ are pairwise distinct, there is no $i\in\{0,1,\dots,m-2\}$ such that $\mathbf w_i\approx\mathbf w_{i+1}\approx\mathbf w_{i+2}$ hold in one of the varieties \textbf V or \textbf Z, and there is no $i\in\{0,1,\dots,m-1\}$ such that $\mathbf w_i\approx\mathbf w_{i+1}$ holds in both of these two varieties. Since each of the varieties \textbf V and \textbf Z contains \textbf{SL}, Lemma~\ref{u=v in SL} implies that
\begin{equation}
\label{content}
\con(\mathbf w_0)=\con(\mathbf w_1)=\cdots=\con(\mathbf w_m).
\end{equation}
This fact and Lemma~\ref{u=v in SL} imply that if, for some $i\in\{0,1,\dots,m-1\}$, the identity $\mathbf w_i\approx\mathbf w_{i+1}$ holds in \textbf N then it holds in \textbf V too. All these observations will be used below without references.

The case $m=1$ is evident. Indeed, if $m=1$ then the identity $\mathbf{u\approx v}$ holds in one of the varieties \textbf V or \textbf Z. This identity holds also in \textbf Y, whence it holds in one of the varieties $\mathbf{V\vee Y}$ or \textbf Z, and  moreover in $\mathbf{(V\vee Y)\wedge Z}$.

Suppose that $m=2$. By symmetry, we may assume that $\mathbf u\approx \mathbf w_1$ holds in \textbf V and $\mathbf w_1\approx\mathbf v$ holds in \textbf Z. Then $\mathbf w_1\approx\mathbf v\approx\mathbf u$ hold in \textbf Y. We see that the identities $\mathbf u\approx\mathbf w_1$ and $\mathbf w_1\approx\mathbf v$ hold in $\mathbf{V\vee Y}$ and \textbf Z respectively, whence $\mathbf{u\approx v}$ holds in $\mathbf{(V\vee Y)\wedge Z}$.

\smallskip

\textsl{At the rest part of the proof we suppose that} $m\ge3$.

\smallskip

First of all, we consider one very special but important partial case.

\begin{lemma}
\label{ZVZ}
If $m=3$, the identities $\mathbf w_0\approx\mathbf w_1$ and $\mathbf w_2\approx\mathbf w_3$ hold in the variety $\mathbf Z$, and the identity $\mathbf w_1\approx\mathbf w_2$ holds in the variety $\mathbf V$ then the identity $\mathbf{u\approx v}$ holds in the variety $\mathbf{(V\vee Y)\wedge Z}$.
\end{lemma}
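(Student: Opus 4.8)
The plan is to show that the only ``exceptional'' step of the given three-step chain --- namely $\mathbf w_1\approx\mathbf w_2$, which we are told only holds in $\mathbf V$ --- in fact also holds in $\mathbf{V\vee Y}$. Once this is known, the sequence $\mathbf w_0,\mathbf w_1,\mathbf w_2,\mathbf w_3$ is itself a deduction of $\mathbf{u\approx v}$ from identities of the varieties $\mathbf{V\vee Y}$ and $\mathbf Z$ (the outer steps holding in $\mathbf Z$, the middle one in $\mathbf{V\vee Y}$), and therefore $\mathbf{u\approx v}$ holds in $\mathbf{(V\vee Y)\wedge Z}$, which is exactly the assertion. Equivalently, since $\mathbf{(V\vee Y)\wedge Z}$ is contained both in $\mathbf{V\vee Y}$ and in $\mathbf Z$, all three identities hold in $\mathbf{(V\vee Y)\wedge Z}$, and transitivity of $\approx$ finishes the job.

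To establish that $\mathbf w_1\approx\mathbf w_2$ holds in $\mathbf{V\vee Y}$, I would argue as follows. First, since $\mathbf{SL\subseteq Y\subseteq Z}$, the identities $\mathbf w_0\approx\mathbf w_1$ and $\mathbf w_2\approx\mathbf w_3$, which hold in $\mathbf Z$ by hypothesis, also hold in the subvariety $\mathbf Y$. Next, by the standing assumptions of the proof, the identity $\mathbf{u\approx v}$ holds in $\mathbf Y$; and because $m=3$, this identity is precisely $\mathbf w_0\approx\mathbf w_3$. Applying transitivity of $\approx$ inside $\mathbf Y$ to the chain $\mathbf w_1\approx\mathbf w_0\approx\mathbf w_3\approx\mathbf w_2$ yields that $\mathbf w_1\approx\mathbf w_2$ holds in $\mathbf Y$. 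Since this identity also holds in $\mathbf V$ by hypothesis, it holds in $\mathbf{V\vee Y}$, and we are done.

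There is essentially no real obstacle here; the argument is purely about the combinatorics of the (shortest) deduction and uses no property of the nil-variety $\mathbf N$ or of the identity systems~\eqref{xyz=zyx,xxy=0}--\eqref{xyz=xzy,xxy=0}. The one point that must be used, and that is exactly why the hypothesis $m=3$ is essential, is that the endpoints of the three-step chain are $\mathbf u$ and $\mathbf v$ themselves, so the global hypothesis ``$\mathbf{u\approx v}$ holds in $\mathbf Y$'' can be invoked to complete the chain $\mathbf w_1\approx\mathbf w_0\approx\mathbf w_3\approx\mathbf w_2$ inside $\mathbf Y$; for a $\mathbf Z,\mathbf V,\mathbf Z$ pattern sitting in the interior of a longer deduction this trick is unavailable, which is presumably why this special case is singled out. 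The only care required in writing it up is bookkeeping: tracking which of $\mathbf V$, $\mathbf Y$, $\mathbf Z$ each identity belongs to and repeatedly using that a subvariety inherits all identities of a larger variety.
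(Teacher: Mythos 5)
Your proof is correct and takes essentially the same route as the paper's: both establish that $\mathbf w_1\approx\mathbf w_2$ holds in $\mathbf Y$ via the chain $\mathbf w_1\approx\mathbf w_0\approx\mathbf w_3\approx\mathbf w_2$ (using $\mathbf{Y\subseteq Z}$ and the standing fact that $\mathbf{u\approx v}$ holds in $\mathbf Y$), conclude that it holds in $\mathbf{V\vee Y}$, and then combine this with the two outer steps holding in $\mathbf Z$.
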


\begin{proof}
Recall that the identity $\mathbf{u\approx v}$ holds in \textbf Y, $\mathbf u=\mathbf w_0$ and $\mathbf v=\mathbf w_3$. Since $\mathbf{Y\subseteq Z}$, we have that $\mathbf w_1\approx\mathbf w_0\approx\mathbf w_3\approx\mathbf w_2$ hold in \textbf Y. Therefore, the identity $\mathbf w_1\approx\mathbf w_2$ holds in the variety $\mathbf{V\vee Y}$. It remains to take into account that the identities $\mathbf w_0\approx\mathbf w_1$ and $\mathbf w_2\approx\mathbf w_3$ hold in \textbf Z.
\end{proof}

Put
\begin{align*}
Z&=\{\mathbf w\in F\mid\ \text{the identity}\ \mathbf w\approx 0\ \text{holds in the variety}\ \mathbf N\},\\
L&=\{\mathbf w\in F\mid\ \text{the word}\ \mathbf w\ \text{is linear and}\ \mathbf w\notin Z\},\\
S&=\{\mathbf w\in F\mid \mathbf w\notin Z\cup L\}.
\end{align*}
Lemma~\ref{permut-3 nec} readily implies that if $S\ne\varnothing$ then any word from $S$ is similar to:
\begin{itemize}
\item[$\bullet$] one of the words $x^2$ or $xyx$ whenever \textbf V satisfies~\eqref{xyz=zyx,xxy=0};
\item[$\bullet$] the word $x^2$ whenever \textbf V satisfies~\eqref{xyz=yzx,xxy=0};
\item[$\bullet$] one of the words $x^2$ or $x^2y$ whenever \textbf V satisfies~\eqref{xyz=yxz,xyy=0};
\item[$\bullet$] one of the words $x^2$ or $xy^2$ whenever \textbf V satisfies~\eqref{xyz=xzy,xxy=0}.
\end{itemize}
Let $S_k$ be the set of all words from $S$ depending on $k$ letters. We have the following

\begin{remark}
\label{words from S}
$S=S_1\cup S_2$; if $\mathbf u\in S_1$ then $\mathbf u$ is similar to $x^2$; if $\mathbf u,\mathbf v\in S_2$ then $\mathbf u$ and $\mathbf v$ are similar.\qed
\end{remark}

It is evident that each of the words $\mathbf w_0,\mathbf w_1,\dots,\mathbf w_m$ lies in exactly one of the sets $Z$, $L$ or $S$. It is clear also that if $\mathbf w',\mathbf w''\in Z$ then $\mathbf w'\approx\mathbf w''$ in \textbf N; if, besides that, $\con(\mathbf w')=\con(\mathbf w'')$ then $\mathbf w'\approx\mathbf w''$ in \textbf V. In particular, if $\mathbf w_i,\mathbf w_j\in Z$ for some $i,j\in\{0,1,\dots,m\}$ then $\mathbf w_i\approx\mathbf w_j$ in \textbf V. All these observations will be used below without references.

One can provide some properties of the sequence~\eqref{deduct}.

\begin{lemma}
\label{Z}
If $\mathbf w_i,\mathbf w_j\in Z$ for some $0\le i<j\le m$ then $j=i+1$. In particular, the sequence~\textup{\eqref{deduct}} contains at most two words from the set $Z$.
\end{lemma}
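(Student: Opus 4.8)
\emph{The plan} is a short argument by contradiction exploiting the minimality of the deduction~\eqref{deduct}. Suppose, for some $0\le i<j\le m$ with $\mathbf w_i,\mathbf w_j\in Z$, we have $j\ne i+1$, i.e.\ $j\ge i+2$. First I would invoke~\eqref{content} to get $\con(\mathbf w_i)=\con(\mathbf w_j)$. Since both $\mathbf w_i$ and $\mathbf w_j$ lie in $Z$, the identity $\mathbf w_i\approx\mathbf w_j$ holds in $\mathbf N$ (both sides are $0$ there); and it holds in $\mathbf{SL}$ by Lemma~\ref{u=v in SL} because the two words have the same content. As $\mathbf V=\mathbf{SL\vee N}$, it follows that $\mathbf w_i\approx\mathbf w_j$ holds in $\mathbf V$ — this is precisely the observation recorded just before the statement of the lemma.

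Next I would replace the block $\mathbf w_i,\mathbf w_{i+1},\dots,\mathbf w_j$ in~\eqref{deduct} by the single step $\mathbf w_i,\mathbf w_j$, obtaining the sequence $\mathbf w_0,\dots,\mathbf w_i,\mathbf w_j,\mathbf w_{j+1},\dots,\mathbf w_m$. Every consecutive pair in this new sequence either was already a consecutive pair in~\eqref{deduct} (hence the corresponding identity holds in $\mathbf V$ or in $\mathbf Z$) or is the new pair $\mathbf w_i\approx\mathbf w_j$, which by the previous paragraph holds in $\mathbf V$. Thus the new sequence is again a deduction of $\mathbf{u\approx v}$ from identities of $\mathbf V$ and $\mathbf Z$, and since $j\ge i+2$ it is strictly shorter than~\eqref{deduct}, contradicting the minimality of~\eqref{deduct}. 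Hence $j=i+1$.

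Finally, for the ``in particular'' clause I would argue that if three of the words $\mathbf w_0,\dots,\mathbf w_m$ lay in $Z$, say $\mathbf w_i,\mathbf w_j,\mathbf w_k$ with $i<j<k$, then applying the already proved assertion to the pair $(i,k)$ would force $k=i+1$, which is incompatible with $i<j<k$; so at most two of the words of~\eqref{deduct} can belong to $Z$.

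I do not expect a genuine obstacle here: the argument is pure bookkeeping on minimality. The only point needing a moment's care is verifying that the spliced sequence is still a legitimate deduction, and that reduces exactly to the content-preservation fact~\eqref{content} together with Lemma~\ref{u=v in SL}, which guarantee that two words of $Z$ occurring in~\eqref{deduct} are $\mathbf V$-equivalent.
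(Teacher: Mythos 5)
Your argument is correct and coincides with the paper's own proof: both deduce that $\mathbf w_i\approx\mathbf w_j$ holds in $\mathbf V$ from the observation recorded just before the lemma, and then splice out the intermediate words to contradict the minimality of~\eqref{deduct}. You additionally spell out the ``in particular'' clause, which the paper leaves implicit, but the substance is identical.
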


\begin{proof}
The identity $\mathbf w_i\approx\mathbf w_j$ holds in the variety \textbf V. If $j>i+1$ then the sequence of words
$$
\mathbf w_0,\mathbf w_1,\dots,\mathbf w_i,\mathbf w_j,\dots,\mathbf w_m
$$
is a deduction of the identity $\mathbf{u\approx v}$ from identities of the varieties \textbf V and \textbf Z shorter than the sequence~\eqref{deduct}.
\end{proof}

\begin{lemma}
\label{S}
If $\mathbf w_i\in S_1$ for some $i\in\{0,1,\dots,m\}$ then:
\begin{itemize}
\item[\textup{(i)}] either $i=0$ or $i=m$;
\item[\textup{(ii)}] if $i=0$ \textup[respectively $i=m$\textup] then the identity $\mathbf w_0\approx\mathbf w_1$ \textup[respectively $\mathbf w_{m-1}\approx\mathbf w_m$\textup] holds in the variety $\mathbf Z$.
\end{itemize}
\end{lemma}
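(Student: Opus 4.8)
The plan is to use the very rigid behaviour of monogenic words under the identities of $\mathbf V=\mathbf{SL\vee N}$, together with the minimality of the deduction~\eqref{deduct}. Since $\mathbf w_i\in S_1$, Remark~\ref{words from S} tells us that $\mathbf w_i$ is similar to $x^2$, so $\con(\mathbf w_i)$ is a one-element set; by~\eqref{content} every word $\mathbf w_j$ has the same content, and hence, after renaming, $\mathbf w_j=x^{k_j}$ for all $j$, with $k_i=2$. Note also that the hypothesis $\mathbf w_i\in S_1$ already forces $\mathbf N$ not to satisfy $x^2\approx0$: otherwise every word similar to $x^2$ would lie in $Z$, and $S_1$ would be empty. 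In particular, $\mathbf N\ne\mathbf T$.

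The key observation I would establish next is that an identity $x^a\approx x^2$ with $a\ne2$ cannot hold in $\mathbf V$. Indeed, each of the identity systems~\eqref{xyz=zyx,xxy=0}--\eqref{xyz=xzy,xxy=0} implies $x^3\approx0$ in $\mathbf N$ --- by substituting $y\mapsto x$ in $x^2y\approx0$ for the systems~\eqref{xyz=zyx,xxy=0}, \eqref{xyz=yzx,xxy=0}, \eqref{xyz=xzy,xxy=0}, and $x\mapsto y$ in $xy^2\approx0$ for the system~\eqref{xyz=yxz,xyy=0} --- and consequently $x^a\approx0$ in $\mathbf N$ for every $a\ge3$. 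Now if $x^a\approx x^2$ held in $\mathbf V$, then it would hold in $\mathbf N$; for $a\ge3$ this would give $x^2\approx x^a\approx0$ in $\mathbf N$, contradicting the previous paragraph, while for $a=1$ it would give $x\approx x^2\approx0$ in $\mathbf N$, forcing $\mathbf N=\mathbf T$ --- again a contradiction.

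It remains to feed this into the deduction. The words in~\eqref{deduct} are pairwise distinct, so $k_{i-1}\ne2$ whenever $i\ge1$ and $k_{i+1}\ne2$ whenever $i\le m-1$. By the key observation, whichever of the identities $\mathbf w_{i-1}\approx\mathbf w_i$ and $\mathbf w_i\approx\mathbf w_{i+1}$ is defined cannot hold in $\mathbf V$, hence it holds in $\mathbf Z$. If we had $0<i<m$, then both $\mathbf w_{i-1}\approx\mathbf w_i$ and $\mathbf w_i\approx\mathbf w_{i+1}$ would hold in $\mathbf Z$, that is, $\mathbf w_{i-1}\approx\mathbf w_i\approx\mathbf w_{i+1}$ in $\mathbf Z$, contradicting the minimality of~\eqref{deduct}. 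This proves~(i); and~(ii) is now immediate, since for $i=0$ the identity $\mathbf w_0\approx\mathbf w_1$ is defined and was just shown to hold in $\mathbf Z$, and symmetrically for $i=m$.

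The step I expect to be the main obstacle is the key observation of the second paragraph: it is precisely the point where the specific structure of the four identity systems (through $x^3\approx0$) must be used, rather than merely the fact that $\mathbf N$ is a nil-variety. Once it is in place, the remainder is routine bookkeeping with the minimality of~\eqref{deduct}.
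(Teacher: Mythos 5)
Your argument is correct and essentially coincides with the paper's: both reduce the lemma to showing that an identity $x^a\approx x^2$ with $a\ne 2$ cannot hold in $\mathbf N$ (hence in $\mathbf V$), and then use the minimality of the deduction~\eqref{deduct} to rule out $0<i<m$ and to force the boundary identity into $\mathbf Z$. The only divergence is in that key step, which you flag as the main obstacle: you derive $x^3\approx 0$ from the specific identity systems, whereas the paper observes that $x^a\approx x^2$ with $a\ne 2$ already forces $x^2\approx 0$ in \emph{any} nil-variety, so the particular structure of the systems~\eqref{xyz=zyx,xxy=0}--\eqref{xyz=xzy,xxy=0} is not actually needed here.
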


\begin{proof}
In view of Remark~\ref{words from S}, we may assume without loss of generality that $\mathbf w_i= x^2$. Suppose that $0<i<m$. Then $\mathbf w_i\approx\mathbf w_j$ holds in \textbf V for some $j\in\{i-1,i+1\}$. In particular, $\mathbf w_i\approx\mathbf w_j$ in \textbf N. Since $\con(\mathbf w_j)=\con(\mathbf w_i)=\{x\}$ and $\mathbf w_j\ne\mathbf w_i$, we have that $\mathbf w_j=x^k$ for some $k\ne2$. It is evident that $x^k\approx x^2$ implies $x^2\approx 0$ in any nil-variety. Therefore, $\mathbf w_i\approx 0$ holds in \textbf N contradicting the claim $\mathbf w_i\in S$. The claim~(i) is proved. Analogous arguments show that if $i=0$ [respectively $i=m$] then the identity $\mathbf w_0\approx\mathbf w_1$ [respectively $\mathbf w_{m-1}\approx\mathbf w_m$] fails in \textbf V and therefore, holds in \textbf Z. The claim~(ii) is proved as well.
\end{proof}

\begin{lemma}
\label{L}
If the sequence~\textup{\eqref{deduct}} does not contain a word from the set $S_2$ and $\mathbf w_i\in L$ for some $i\in\{1,\dots,m-1\}$ then either $\mathbf w_{i-1}\in L$ or $\mathbf w_{i+1}\in L$.
\end{lemma}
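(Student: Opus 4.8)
The plan is to argue by contradiction. Suppose $\mathbf w_i\in L$ but neither $\mathbf w_{i-1}$ nor $\mathbf w_{i+1}$ lies in $L$. Every word of~\eqref{deduct} belongs to exactly one of $Z$, $L$, $S=S_1\cup S_2$, and by hypothesis none of them lies in $S_2$; hence $\mathbf w_{i-1},\mathbf w_{i+1}\in Z\cup S_1$. First I would dispose of the subcase $\mathbf w_{i-1},\mathbf w_{i+1}\in Z$: this contradicts Lemma~\ref{Z} at once, since $i-1<i+1$ while $(i-1)+1\ne i+1$. So, up to the left-right symmetry of the sequence, I may assume $\mathbf w_{i+1}\in S_1$.

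The next step exploits that words of $S_1$ are similar to $x^2$ (Remark~\ref{words from S}) and hence depend on a single letter; by~\eqref{content} all the words $\mathbf w_0,\dots,\mathbf w_m$ then depend on one and the same letter, say $x$. This pins everything down: $\mathbf w_i=x$ (it is linear), $\mathbf w_{i+1}=x^2$, and $\mathbf w_{i-1}$, being a power of $x$, equals $x^j$ for some $j\ge2$ (with $j\ge2$ forced because the words of~\eqref{deduct} are pairwise distinct, so $\mathbf w_{i-1}\ne\mathbf w_i$). Moreover $\mathbf N$ must be non-trivial, for otherwise $x\approx0$ would hold in $\mathbf N$, putting $\mathbf w_i=x$ into $Z$, against $\mathbf w_i\in L$. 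The key point is then to show that for every $j\ge2$ the identity $x\approx x^j$ fails in $\mathbf N$, and therefore in $\mathbf V=\mathbf{SL}\vee\mathbf N$ as well: in a nilsemigroup each element $a$ satisfies $a^k=0$ for some $k$, so $x\approx x^j$ would iterate to $x\approx x^{j^t}\approx0$ as soon as $j^t\ge k$, collapsing $\mathbf N$ to the trivial variety. Since each of the two steps $\mathbf w_{i-1}\approx\mathbf w_i$ and $\mathbf w_i\approx\mathbf w_{i+1}$ holds in $\mathbf V$ or in $\mathbf Z$, and neither can hold in $\mathbf V$, both hold in $\mathbf Z$; but then $\mathbf w_{i-1}\approx\mathbf w_i\approx\mathbf w_{i+1}$ all hold in $\mathbf Z$, contradicting the minimality of the deduction~\eqref{deduct}. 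That contradiction finishes the argument.

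I expect the single-letter case to be the only genuinely delicate point: one must be certain that the ``short'' step identities $x\approx x^j$ are absorbed neither by $\mathbf{SL}$ nor by $\mathbf N$, so that they are pushed into $\mathbf Z$ and the ``no three consecutive words in one variety'' clause of minimality applies. The role of the hypotheses is transparent: it is the absence of $S_2$-words that keeps the neighbours in $Z\cup S_1$, and it is $\mathbf w_i\in L$ — linearity together with $\mathbf w_i\notin Z$ — that supplies both $\mathbf w_i=x$ and the non-triviality of $\mathbf N$. Everything else is routine use of~\eqref{content}, Lemma~\ref{Z} and Remark~\ref{words from S}.
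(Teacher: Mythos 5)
Your proof is correct, and it arrives at the same final contradiction as the paper: both identities $\mathbf w_{i-1}\approx\mathbf w_i$ and $\mathbf w_i\approx\mathbf w_{i+1}$ are forced to hold in $\mathbf Z$, violating the minimality clause that forbids three consecutive words of~\eqref{deduct} being linked within a single variety. The route by which you force them into $\mathbf Z$, however, is genuinely different. The paper argues abstractly about each neighbour: if $\mathbf w_{i\pm1}\in Z$, then one side of the step identity is $\approx 0$ in $\mathbf N$ while the other is not (because $\mathbf w_i\in L$), so the identity fails in $\mathbf N\subseteq\mathbf V$ and hence holds in $\mathbf Z$; if $\mathbf w_{i\pm1}\in S_1$, Lemma~\ref{S}(ii) delivers the $\mathbf Z$-membership directly. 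You instead eliminate the both-in-$Z$ case with Lemma~\ref{Z}, and in the remaining cases exploit the fact that an $S_1$-neighbour, combined with~\eqref{content}, forces every word of the deduction to be a power of one letter, so that the two step identities become $x\approx x^j$ with $j\ge2$; you then refute these in $\mathbf N$ by the nil-variety iteration $a=a^{j^t}=0$ (which is sound: it would make $\mathbf N$ trivial, contradicting $\mathbf w_i=x\in L$). Your version avoids Lemma~\ref{S} entirely at the price of a little more case analysis and an explicit nilpotency computation, while the paper's single observation that one side is $\approx0$ and the other is not handles the $Z$-neighbour uniformly without identifying the words concretely. Both arguments are complete; I see no gap in yours.
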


\begin{proof}
Arguing by contradiction, suppose that $\mathbf w_{i-1},\mathbf w_{i+1}\in Z\cup S$. If $\mathbf w_{i-1}\in S$ then $\mathbf w_{i-1}\in S_1$ and Lemma~\ref{S} applies with the conclusion that the identity $\mathbf w_{i-1}\approx\mathbf w_i$ holds in \textbf Z. Let now $\mathbf w_{i-1}\in Z$. Then the variety \textbf N satisfies the identity $\mathbf w_{i-1}\approx 0$. On the other hand, the identity $\mathbf w_i\approx 0$ fails in \textbf N because $\mathbf w_i\in L$. Therefore, the identity $\mathbf w_{i-1}\approx\mathbf w_i$ fails in \textbf V, whence it holds in \textbf Z. We see that $\mathbf w_{i-1}\approx\mathbf w_i$ holds in \textbf Z in any case. Analogous arguments show that $\mathbf w_i\approx\mathbf w_{i+1}$ holds in \textbf Z. Thus, $\mathbf w_{i-1}\approx\mathbf w_i\approx\mathbf w_{i+1}$ hold in \textbf Z that is impossible.
\end{proof}

\begin{lemma}
\label{LL-Zfree}
If the sequence~\textup{\eqref{deduct}} does not contain a word from the set $S_2$ and $\mathbf w_i,\mathbf w_{i+1}\in L$ for some $i\in\{0,1,\dots,m-1\}$ then the sequence~\textup{\eqref{deduct}} does not contain words from the set $Z$.
\end{lemma}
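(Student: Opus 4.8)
I plan to argue by contradiction: assuming that \eqref{deduct} \emph{does} contain a word from $Z$, I will produce from it a strictly shorter deduction of $\mathbf{u\approx v}$ from identities of $\mathbf V$ and $\mathbf Z$, contradicting the choice of \eqref{deduct}. Throughout I would use the standing consequences of minimality in the shape: each step $\mathbf w_j\approx\mathbf w_{j+1}$ holds in exactly one of $\mathbf V$, $\mathbf Z$ (its \emph{type}), and the types of consecutive steps alternate.

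First I would isolate a convenient fragment. By Lemma~\ref{Z} the words of $Z$ occupy one or two consecutive positions, and these are disjoint from $\{i,i+1\}$ because $\mathbf w_i,\mathbf w_{i+1}\in L$; hence they all precede $i$ or all follow $i+1$, and — reversing \eqref{deduct} if necessary, which preserves every hypothesis — I may assume they all follow $i+1$. Let $[a,b]$ be the longest interval of positions containing $\{i,i+1\}$ with $\mathbf w_a,\dots,\mathbf w_b\in L$. Since the positions $i+1,\dots,b$ all carry words of $L$, the $Z$-word lies at a position exceeding $b$; thus $\mathbf w_{b+1}$ exists, and by maximality of $[a,b]$ together with the absence of words from $S_2$ we have $\mathbf w_{b+1}\in Z\cup S_1$; Lemma~\ref{S} rules out $S_1$ here (a word of $S_1$ sits only at position $0$ or $m$, and $b+1=m$ would make $\{m\}$ the only position past $b$, contradicting that a $Z$-word sits past $b$). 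So $\mathbf w_{b+1}\in Z$. Now $\mathbf w_b\in L$ gives $\mathbf w_b\not\approx 0$ in $\mathbf N$ while $\mathbf w_{b+1}\approx 0$ in $\mathbf N$, so the step $b\to b+1$ has type $\mathbf Z$; consequently the step $b-1\to b$ has type $\mathbf V$ (it cannot be a third consecutive $\mathbf Z$-step), and since $\mathbf w_{b-1},\mathbf w_b\in L$ have the same content by \eqref{content}, $\mathbf w_b$ arises from $\mathbf w_{b-1}$ by a renaming $\tau$ of the common alphabet.

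The heart of the argument is a commutation step. Put $\mathbf q:=\tau^{-1}(\mathbf w_{b+1})$. Applying the renaming $\tau^{-1}$ to the $\mathbf Z$-identity $\mathbf w_b\approx\mathbf w_{b+1}$ (a variety is closed under renaming of variables) and using $\tau^{-1}(\mathbf w_b)=\mathbf w_{b-1}$, we get that $\mathbf w_{b-1}\approx\mathbf q$ holds in $\mathbf Z$. On the other hand $\mathbf q$ is a renaming of $\mathbf w_{b+1}\in Z$, so $\mathbf q\approx 0$ holds in $\mathbf N$, and $\con(\mathbf q)=\con(\mathbf w_{b+1})$, so $\mathbf q\approx\mathbf w_{b+1}$ holds in $\mathbf N$ and in $\mathbf{SL}$, hence in $\mathbf V$. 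Thus replacing $\mathbf w_{b-1},\mathbf w_b,\mathbf w_{b+1}$ by $\mathbf w_{b-1},\mathbf q,\mathbf w_{b+1}$ in \eqref{deduct} again yields a deduction of $\mathbf{u\approx v}$ from identities of $\mathbf V$ and $\mathbf Z$, of the same length, but now with the step $b-1\to b$ of type $\mathbf Z$. It then remains to reach a contradiction from the untouched step $b-2\to b-1$: if $b-2\ge a$, that step had type $\mathbf Z$ in the original deduction (by alternation, since $b-1\to b$ was of type $\mathbf V$ there), so now $\mathbf w_{b-2}\approx\mathbf q$ holds in $\mathbf Z$ and $\mathbf w_{b-1}$ can be deleted; if $b=a+1\ge 2$, then $\mathbf w_{a-1}\notin Z$ by Lemma~\ref{Z}, so $\mathbf w_{a-1}\in S_1$, so by Lemma~\ref{S} $a-1=0$ and $\mathbf w_0\approx\mathbf w_1$ holds in $\mathbf Z$, and again $\mathbf w_1$ can be deleted; and if $b=a+1=1$, then since $m\ge 3$ the untouched step $2\to 3$ has type $\mathbf V$, so $\mathbf q\approx\mathbf w_3$ holds in $\mathbf V$ and $\mathbf w_2$ can be deleted. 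In every case this produces a strictly shorter deduction of $\mathbf{u\approx v}$, the desired contradiction; hence \eqref{deduct} contains no word of $Z$.

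I expect the main obstacle to be the careful bookkeeping of step-types after the commutation and the three small sub-cases where $[a,b]$ has just two positions (which is where Lemmas~\ref{Z} and~\ref{S} must be invoked to control the word immediately before the block). The one genuinely substantive point is that the identity $\mathbf q\approx\mathbf w_{b+1}$ left over after commuting automatically lies in $\mathbf V$: this works precisely because $\mathbf w_{b+1}$, being in $Z$, is annihilated in $\mathbf N$, so the commuted $\mathbf V$-step costs nothing.
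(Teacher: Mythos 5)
Your argument is correct and follows essentially the same route as the paper: the key step in both is to pull the $Z$-word $\mathbf w_{b+1}$ back through the $\mathbf V$-step by the letter-renaming $\tau$ relating the two adjacent linear words, observe that the resulting word is $\mathbf V$-equivalent to $\mathbf w_{b+1}$ (both being $0$ in $\mathbf N$ with equal content), and then merge the commuted step with a neighbouring step of matching type to shorten the deduction. The only difference is organizational: the paper first proves the local claim that no $Z$-word can be adjacent to a pair of consecutive $L$-words and then globalizes via the maximal $L$-block, whereas you run the whole argument at once by contradiction at the boundary of that block.
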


\begin{proof}
Suppose that $i+1<m$ and $\mathbf w_{i+2}\in Z$. Then the variety \textbf N satisfies the identity $\mathbf w_{i+2}\approx 0$ but does not satisfy the identity $\mathbf w_{i+1}\approx 0$. Hence the identity $\mathbf w_{i+1}\approx\mathbf w_{i+2}$ is false in \textbf N, and moreover in \textbf V. Therefore, $\mathbf w_{i+1}\approx\mathbf w_{i+2}$ holds in \textbf Z, whence $\mathbf w_i\approx\mathbf w_{i+1}$ holds in \textbf V. Clearly, $\mathbf w_i=\sigma[\mathbf w_{i+1}]$ for some permutation $\sigma$ on the set $\con(\mathbf w_{i+1})$. Then $\mathbf w_i=\sigma(\mathbf w_{i+1})\approx\sigma(\mathbf w_{i+2})$ holds in \textbf Z. Since $\mathbf w_{i+2}\approx 0$ holds in \textbf N, we have that $\sigma[\mathbf w_{i+2}]\approx 0$ holds in \textbf N too. Thus $\sigma[\mathbf w_{i+2}]\approx\mathbf w_{i+2}$ holds in \textbf N and therefore, in \textbf V. Recall that $m\ge3$. Hence either $i>0$ or $i+2<m$. If $i>0$ then $\mathbf w_{i-1}\approx\mathbf w_i\approx \sigma[\mathbf w_{i+2}]$ hold in \textbf Z and the sequence of words
$$
\mathbf w_0,\mathbf w_1,\dots,\mathbf w_{i-1},\sigma[\mathbf w_{i+2}],\mathbf w_{i+2},\dots,\mathbf w_m
$$
is a deduction of the identity $\mathbf{u\approx v}$ from identities of the varieties \textbf V and \textbf Z shorter than the sequence~\eqref{deduct}. Further, if $i+2<m$ then $\sigma[\mathbf w_{i+2}]\approx\mathbf w_{i+2}\approx\mathbf w_{i+3}$ hold in \textbf V and we also have a deduction of the identity $\mathbf{u\approx v}$ from identities of the varieties \textbf V and \textbf Z shorter than the sequence~\eqref{deduct}, namely the sequence of words
$$
\mathbf w_0,\mathbf w_1,\dots,\mathbf w_i,\sigma[\mathbf w_{i+2}],\mathbf w_{i+3},\dots,\mathbf w_m.
$$
Both these cases are impossible. We prove that if $i+1<m$ then $\mathbf w_{i+2}\notin Z$. By symmetry, if $i>0$ then $\mathbf w_{i-1}\notin Z$.

Let now $\mathbf w_j,\mathbf w_{j+1},\dots,\mathbf w_{j+k}$ (where $0\le j<j+k\le m$) be the maximal subsequence of the sequence~\eqref{deduct} consisting of words from the set $L$. In other words, $\mathbf w_j,\mathbf w_{j+1},\dots,\mathbf w_{j+k}\in L$, $\mathbf w_{j+k+1}\notin L$ whenever $j+k<m$, and $\mathbf w_{j-1}\notin L$ whenever $j>0$. Suppose that $j+k<m$. As we have seen in the previous paragraph, this implies that $\mathbf w_{j+k+1}\notin Z$, whence $\mathbf w_{j+k+1}\in S$. Now Lemma~\ref{S}(i) applies, and we conclude that $j+k+1=m$. Analogously, if $j>0$ then $j=1$ and $\mathbf w_0\in S$. We see that words from the set $Z$ are absent in the sequence~\eqref{deduct}.
\end{proof}

Further considerations are divided into two cases.

\smallskip

\emph{Case} 1: the sequence~\eqref{deduct} does not contain a word from the set $S_2$. Suppose at first that the sequence~\eqref{deduct} does not contain adjacent words from the set $L$. Lemmas~\ref{S}(i) and~\ref{L} imply that $\mathbf w_1,\dots,\mathbf w_{m-1}\in Z$ in this case, whence $\mathbf w_1\approx \cdots\approx\mathbf w_{m-1}$ in $\mathbf V$. Therefore, $m-1=2$, that is, $m=3$. Since the identity $\mathbf w_1\approx\mathbf w_2$ holds in $\mathbf V$, the identities $\mathbf w_0\approx\mathbf w_1$ and $\mathbf w_2\approx\mathbf w_3$ hold in $\mathbf Z$. Now Lemma~\ref{ZVZ} applies and we are done.

Suppose now that the sequence~\eqref{deduct} contains adjacent words from the set $L$ but does not contain three words in row from this set. By Lemma~\ref{LL-Zfree} the sequence~\eqref{deduct} does not contain words from the set $Z$. Then Lemma~\ref{S}(i) implies that $m=3$, $\mathbf w_0,\mathbf w_3\in S$ and $\mathbf w_1,\mathbf w_2\in L$. By Lemma~\ref{S}(ii) the identities $\mathbf w_0\approx\mathbf w_1$ and $\mathbf w_2\approx\mathbf w_3$ hold in \textbf Z. Therefore, $\mathbf w_1\approx\mathbf w_2$ holds in \textbf V and Lemma~\ref{ZVZ} applies again.

Finally, suppose that the sequence~\eqref{deduct} contains three words in row from the set $L$. Let $\mathbf w_i,\mathbf w_{i+1},\mathbf w_{i+2}\in L$ for some $i\in\{0,1,\dots,m-2\}$. Put $k=\ell(\mathbf w_i)$. In view of~\eqref{content}, we have $\ell(\mathbf w_{i+1})=\ell(\mathbf w_{i+2})=k$. One of the identities $\mathbf w_i\approx\mathbf w_{i+1}$ or $\mathbf w_{i+1}\approx\mathbf w_{i+2}$ holds in the variety \textbf V, whence it holds in \textbf N. 

Suppose that $k=2$. The variety \textbf N is commutative in this case. Then \textbf N is a modular element of \textbf{SEM} by Proposition~\ref{commut} and therefore, \textbf V has the same property by Lemma~\ref{+-SL}.

Let now $k\ge 3$. We may assume without loss of generality that $\con(\mathbf w_j)=\{x_1,x_2,\dots,x_k\}$ for all $j=0,1,\dots,m$. According to Lemma~\ref{LL-Zfree} the sequence~\eqref{deduct} does not contain words from the set $Z$. Further, by Remark~\ref{words from S} if $\mathbf w\in S$ then $\mathbf w$ is similar to $x^2$, whence $\con(\mathbf w)\ne\{x_1,x_2,\dots,x_k\}$. Hence~\eqref{deduct} does not contain words from the set $S$. Therefore, $\mathbf w_0,\mathbf w_1,\dots,\mathbf w_m\in L$.{\sloppy

}

Suppose that $k=3$. Since $\mathbf w_i\approx\mathbf w_{i+1}$ holds in \textbf Z for some $i\in\{0,1,\dots,m-1\}$, the variety \textbf Z satisfies a permutational identity of length~3. In other words, the group $\Perm_3(\mathbf Z)$ contains some non-trivial permutation $\sigma$. It is clear that $\mathbf w_m=\tau[\mathbf w_0]$ for some permutation $\tau\in\mathbb S_3$. The permutation $\tau$ is non-trivial because the identity $\mathbf{u\approx v}$ is non-trivial. If $\tau\in\Perm_3(\mathbf Z)$ then the identity $\mathbf w_0\approx\mathbf w_m$ (that is, $\mathbf{u\approx v}$) holds in the variety \textbf Z and therefore, in $\mathbf{(V\vee Y)\wedge Z}$. Suppose now that $\tau\notin\Perm_3(\mathbf Z)$. The identity $\mathbf{u\approx v}$ has the form $\mathbf u\approx \tau[\mathbf u]$. Recall that this identity holds in the variety \textbf Y, whence $\tau\in\Perm_3(\mathbf Y)$. Besides that, $\sigma\in\Perm_3(\mathbf Y)$ because $\mathbf{Y\subseteq Z}$. Sinse $\tau\notin\Perm_3(\mathbf Z)$, we have that $\sigma$ and $\tau$ generate distinct non-trivial subgroups of $\mathbb S_3$. Therefore, the subgroup of $\mathbb S_3$ generated by these two permutations coincides with $\mathbb S_3$ (see Fig.~\ref{Sub(S_3)}), that is, $\Perm_3(\mathbf Y)=\mathbb S_3$. In other words, \textbf Y satisfies all permutational identities of length~3. This means that, for each $i=0,1,\dots,m-1$, the identity $\mathbf w_i\approx\mathbf w_{i+1}$ holds in one of the varieties $\mathbf{V\vee Y}$ or \textbf Z, whence $\mathbf{u\approx v}$ holds in $\mathbf{(V\vee Y)\wedge Z}$.

Finally, let $k\ge 4$. The identity $\mathbf{u\approx v}$, that is, $\mathbf w_0\approx\mathbf w_m$ is a permutational identity of length $k$. In other words, there is a permutation $\xi\in\mathbb S_k$ with $\mathbf v=\xi[\mathbf u]$. If \textbf N satisfies one of the identity systems~\eqref{xyz=zyx,xxy=0} or~\eqref{xyz=yzx,xxy=0} then the claim~1(iii) of Lemma~\ref{permut conseq} applies with the conclusion that $\xi\in\Perm_k(\mathbf N)$. This means that the identity $\mathbf{u\approx v}$ holds in \textbf N and therefore, in \textbf V. In this case $\mathbf{u\approx v}$ holds in $\mathbf{V\vee Y}$, and moreover in $(\mathbf{V\vee Y})\wedge\mathbf Z$. By symmetry, it remains to consider the case when \textbf N satisfies the identity system~\eqref{xyz=yxz,xyy=0}.

Suppose that $k=4$. In view the claim~1(i) of Lemma~\ref{permut conseq}, $\Perm_4(\mathbf N)\supseteq\Stab_4(4)$. Besides that, \textbf N satisfies the identity $xyzt\approx xzty$, whence the group $\Perm_4(\mathbf N)$ contains the permutation $(234)$. Since this permutation does not lie in $\Stab_4(4)$ and $\Stab_4(4)$ is a maximal proper subgroup in $\mathbb S_4$, we have that $\Perm_4(\mathbf N)=\mathbb S_4$. Further, let $k\ge 5$. Then the claims~1(i) and~2 of Lemma~\ref{permut conseq} imply that the group $\Perm_k(\mathbf N)$ contains both the groups $\Stab_k(1)$ and $\Stab_k(k)$, whence $\Perm_k(\mathbf N)=\mathbb S_k$. Thus, the last equality holds for any $k\ge 4$. In particular, $\xi\in\Perm_k(\mathbf N)$ in this case. As we have already seen in the previous paragraph, this implies the requirement conclusion. 

\smallskip

\emph{Case} 2: the sequence~\eqref{deduct} contains a word from the set $S_2$. All words from $S_2$ depend on two letters. In view of~\eqref{content}, we may assume that $\con(\mathbf w_i)=\{x,y\}$ for each $i=0,1,\dots,m$. In particular, the sequence~\eqref{deduct} does not contain words from the set $S_1$. Besides that, Remark~\ref{words from S} shows that if $\mathbf w_i,\mathbf w_j\in S_2$ for some $0\le i<j\le m$ then the words $\mathbf w_i$ and $\mathbf w_j$ are similar. Since words from $S_2$ depend on two letters, this implies that the sequence~\eqref{deduct} contains at most two words from the set $S_2$. All these observations will be used below without references. We need two additional lemmas.

\begin{lemma}
\label{w_i in L}
If the sequence~\eqref{deduct} contains a word from the set $S_2$, $\mathbf w_i\in L$ for some $i\in\{0,1,\dots,m\}$ and the variety $\mathbf V$ satisfies a non-trivial identity of the form $\mathbf w_i\approx\mathbf w$ then the variety $\mathbf V$ is a modular element in $\mathbf{SEM}$.
\end{lemma}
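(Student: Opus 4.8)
The plan is to show that the hypotheses force the nil-variety $\mathbf N$ to be commutative, and then to invoke Proposition~\ref{commut} together with Lemma~\ref{+-SL}. First I would use the standing assumption of Case~2 that~\eqref{deduct} contains a word from $S_2$: together with~\eqref{content} this gives $\con(\mathbf w_j)=\{x,y\}$ for every $j$, so the linear word $\mathbf w_i$ is one of $xy$ or $yx$, and after renaming the two letters we may take $\mathbf w_i=xy$. Since the identity $\mathbf w_i\approx\mathbf w$ holds in $\mathbf V$, hence in $\mathbf{SL}$, Lemma~\ref{u=v in SL} gives $\con(\mathbf w)=\{x,y\}$; and since it is non-trivial, $\mathbf w\ne xy$.

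Next I would split on $\ell(\mathbf w)$, noting that $\con(\mathbf w)=\{x,y\}$ forces $\ell(\mathbf w)\ge2$. If $\ell(\mathbf w)=2$, then $\mathbf w\ne xy$ forces $\mathbf w=yx$, so $\mathbf V$, and therefore its subvariety $\mathbf N$, satisfies $xy\approx yx$. If $\ell(\mathbf w)\ge3$, then $\mathbf N$ satisfies an identity $xy\approx\mathbf w$ with $\ell(\mathbf w)\ne2$, so Lemma~\ref{x_1x_2...x_n=w} yields $xy\approx0$ in $\mathbf N$, which in particular makes $\mathbf N$ commutative. In either case $\mathbf N$ is commutative.

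Finally, $\mathbf N$ satisfies one of the systems~\eqref{xyz=zyx,xxy=0}--\eqref{xyz=xzy,xxy=0}, each of which contains $x^2y\approx0$ or $xy^2\approx0$; by commutativity these two identities are equivalent, so $\mathbf N$ satisfies $x^2y\approx0$. Thus $\mathbf N$ satisfies both $x^2y\approx0$ and $xy\approx yx$, hence it is a modular element of $\mathbf{SEM}$ by Proposition~\ref{commut}, and then $\mathbf V=\mathbf{SL\vee N}$ is a modular element of $\mathbf{SEM}$ by Lemma~\ref{+-SL}. I do not anticipate a genuine obstacle here; the only point that needs a moment's care is that $xy\approx0$ in a nil-variety really does entail commutativity, which follows from the uniqueness of a two-sided zero in a semigroup.
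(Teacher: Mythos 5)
Your proof is correct and follows the same skeleton as the paper's: reduce to $\mathbf w_i=xy$ using $\con(\mathbf w_j)=\{x,y\}$, then split on whether $\mathbf w=yx$ or $\ell(\mathbf w)\ne 2$, handling the first case via Proposition~\ref{commut} and the second via Lemma~\ref{x_1x_2...x_n=w}. The one divergence is in the second branch: after obtaining $xy\approx 0$ in $\mathbf N$, the paper concludes directly that $\mathbf V=\mathbf{SL\vee N}$ is a \emph{neutral} (hence modular) element of $\mathbf{SEM}$ by citing Volkov's result, whereas you observe that $xy\approx 0$ already forces $xy\approx yx$ and $x^2y\approx 0$ (uniqueness of the zero, and $x^2y$ having $xy$ as a factor), so that Proposition~\ref{commut} and Lemma~\ref{+-SL} cover this case too. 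Your route is slightly more self-contained, at the cost of the marginally weaker conclusion (modularity rather than neutrality) in that branch, which is all the lemma needs; you are also more explicit than the paper in checking that commutativity converts $xy^2\approx 0$ into $x^2y\approx 0$ so that Proposition~\ref{commut} genuinely applies to all four systems \eqref{xyz=zyx,xxy=0}--\eqref{xyz=xzy,xxy=0}.
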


\begin{proof}
The word $\mathbf w_i$ is linear and $\con(\mathbf w_i)=\{x,y\}$. Therefore, $\mathbf w_i\in\{xy,yx\}$. We may assume without loss of generality that $\mathbf w_i= xy$. The identity $xy\approx\mathbf w$ holds in the variety \textbf N. If $\mathbf w= yx$ then the variety \textbf N is a modular element in \textbf{SEM} by Proposition~\ref{commut}, whence the variety \textbf V has the same property by Lemma~\ref{+-SL}. Suppose now that $\mathbf w\ne yx$. Then $\ell(\mathbf w)\ne2$. By Lemma~\ref{x_1x_2...x_n=w} $xy\approx 0$ holds in \textbf N. Then the variety $\mathbf V=\mathbf{SL\vee N}$ is a neutral element of the lattice \textbf{SEM} by~\cite[Proposition~4.1]{Volkov-05}. Therefore, the variety \textbf V is a modular element in \textbf{SEM}.
\end{proof}

\begin{lemma}
\label{w_i=word-from-S_2 in V}
If the variety $\mathbf V$ satisfies the identity $\mathbf w_i\approx\mathbf w_{i+1}$ for some $i\in\{0,1,\dots,m-1\}$ and one of the words $\mathbf w_i$ or $\mathbf w_{i+1}$ lies in $S_2$ then either the variety $\mathbf V$ is a modular element in $\mathbf{SEM}$ or both the words $\mathbf w_i$ and $\mathbf w_{i+1}$ lie in $S_2$.
\end{lemma}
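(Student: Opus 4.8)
The plan is to reduce the lemma to Lemma~\ref{w_i in L} by a short case analysis on which of the sets $Z$, $L$, $S_1$, $S_2$ contains the "other" word. By the symmetry of both the hypothesis $\mathbf V\models\mathbf w_i\approx\mathbf w_{i+1}$ and the conclusion, I may assume $\mathbf w_i\in S_2$, and I suppose $\mathbf w_{i+1}\notin S_2$, aiming to show that $\mathbf V$ is a modular element of $\mathbf{SEM}$. Since $\mathbf N\subseteq\mathbf V$, the assumption yields $\mathbf N\models\mathbf w_i\approx\mathbf w_{i+1}$; and because~\eqref{deduct} is a shortest deduction, its words are pairwise distinct, so $\mathbf w_i\ne\mathbf w_{i+1}$ and this identity is non-trivial. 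Recall that in Case~2 we normalized so that $\con(\mathbf w_j)=\{x,y\}$ for every $j$, and in particular~\eqref{deduct} contains no word from $S_1$; thus $\mathbf w_{i+1}\in Z\cup L$.

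First I would rule out $\mathbf w_{i+1}\in Z$. If $\mathbf w_{i+1}\in Z$, then $\mathbf N$ satisfies $\mathbf w_{i+1}\approx 0$, and combining this with $\mathbf N\models\mathbf w_i\approx\mathbf w_{i+1}$ gives $\mathbf N\models\mathbf w_i\approx 0$, i.e. $\mathbf w_i\in Z$. But $\mathbf w_i\in S_2\subseteq F\setminus Z$, a contradiction. Hence $\mathbf w_{i+1}\in L$.

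It then remains to invoke Lemma~\ref{w_i in L} with $\mathbf w_{i+1}$ in the role of its linear word $\mathbf w_i$ and $\mathbf w_i$ in the role of its $\mathbf w$: the sequence~\eqref{deduct} contains a word from $S_2$ (we are in Case~2), $\mathbf w_{i+1}\in L$, and $\mathbf V$ satisfies the non-trivial identity $\mathbf w_{i+1}\approx\mathbf w_i$. Therefore $\mathbf V$ is a modular element of $\mathbf{SEM}$, as required. I do not expect a genuine obstacle here; the only point that needs care is that the content condition~\eqref{content} (via the Case~2 normalization) excludes $\mathbf w_{i+1}$ from $S_1$, so that the dichotomy "$\mathbf w_{i+1}\in Z$, which is impossible, or $\mathbf w_{i+1}\in L$, which lets Lemma~\ref{w_i in L} apply" is exhaustive.
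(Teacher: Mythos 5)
Your proposal is correct and follows essentially the same route as the paper: assume $\mathbf w_i\in S_2$, exclude $\mathbf w_{i+1}\in Z$ via $\mathbf w_i\approx\mathbf w_{i+1}\approx 0$ in $\mathbf N$, handle $\mathbf w_{i+1}\in L$ by Lemma~\ref{w_i in L}, and note that the Case~2 normalization rules out $S_1$, forcing $\mathbf w_{i+1}\in S_2$ otherwise. Your explicit check of the non-triviality of $\mathbf w_{i+1}\approx\mathbf w_i$ (needed to invoke Lemma~\ref{w_i in L}) is a minor point the paper leaves implicit; it is correct.
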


\begin{proof}
We may assume without loss of generality that $\mathbf w_i\in S_2$. If $\mathbf w_{i+1}\in Z$ then $\mathbf w_i\approx\mathbf w_{i+1}\approx 0$ in \textbf N contradicting the claim $\mathbf w_i\in S$. Further, if $\mathbf w_{i+1}\in L$ then Lemma~\ref{w_i in L} applies with the conclusion that the variety \textbf V is a modular element in \textbf{SEM}. Finally, if $\mathbf w_{i+1}\notin Z\cup L$ then $\mathbf w_{i+1}\in S$. Since the sequence~\eqref{deduct} does not contain words from the set $S_1$, we have that $\mathbf w_{i+1}\in S_2$.
\end{proof}

Suppose now that the sequence~\eqref{deduct} contains only one word from the set $S_2$. Namely, let $\mathbf w_i\in S_2$. Suppose that $i\in\{1,\dots,m-1\}$. Then the variety \textbf V satisfies one of the identities $\mathbf w_{i-1}\approx\mathbf w_i$ or $\mathbf w_i\approx\mathbf w_{i+1}$. Since $\mathbf w_{i-1},\mathbf w_{i+1}\notin S_2$, Lemma~\ref{w_i=word-from-S_2 in V} implies that the variety \textbf V is a modular element in \textbf{SEM}. It remains to consider the case when either $i=0$ or $i=m$. By symmetry, we may suppose that $i=0$. Then $\mathbf w_j\notin S$ for all $j=1,2,\dots,m$. If the identity $\mathbf w_0\approx\mathbf w_1$ holds in \textbf V then we may apply Lemma~\ref{w_i=word-from-S_2 in V} and conclude that the variety \textbf V is a modular element in \textbf{SEM}. Suppose now that the identity $\mathbf w_0\approx\mathbf w_1$ holds in \textbf Z. Then $\mathbf w_1\approx\mathbf w_2$ in \textbf V. In view of Lemma~\ref{w_i in L}, we may assume that $\mathbf w_2\notin L$. Therefore, $\mathbf w_2\in Z$. Further, $\mathbf w_2\approx\mathbf w_3$ in $\mathbf Z$. If $m=3$ then Lemma~\ref{ZVZ} applies with the required conclusion. Otherwise, $\mathbf w_3\approx\mathbf w_4$ in \textbf V. Lemma~\ref{w_i in L} allows us to assume that $\mathbf w_4\notin L$, whence $\mathbf w_4\in Z$. Therefore, $\mathbf w_2\approx\mathbf w_4$ in \textbf V. This means that the sequence of words
$$
\mathbf w_0,\mathbf w_1,\mathbf w_2,\mathbf w_4,\dots,\mathbf w_m
$$
is a deduction of the identity $\mathbf{u\approx v}$ from identities of the varieties \textbf V and \textbf Z shorter than~\eqref{deduct}.

Finally, suppose that the sequence~\eqref{deduct} contains two words from the set $S_2$. Suppose at first that $\mathbf w_i\in S_2$ for some $i\in\{1,\dots,m-1\}$. Then the variety \textbf V satisfies the identity $\mathbf w_i\approx\mathbf w_j$ for some $j\in\{i-1,i+1\}$. In view of Lemma~\ref{w_i=word-from-S_2 in V}, we may assume that $\mathbf w_j\in S_2$ in this case. Suppose now that $\mathbf w_i\notin S_2$ whenever $i\in\{1,\dots,m-1\}$. Then $\mathbf w_0,\mathbf w_m\in S_2$. If, besides that, at least one of the identities $\mathbf w_0\approx\mathbf w_1$ or $\mathbf w_{m-1}\approx\mathbf w_m$ holds in the variety \textbf V then Lemma~\ref{w_i=word-from-S_2 in V} applies and we conclude that \textbf V is a modular element in \textbf{SEM}. We see that either $\mathbf w_0,\mathbf w_m\in S_2$ and the identities $\mathbf w_0\approx\mathbf w_1$ and $\mathbf w_{m-1}\approx\mathbf w_m$ hold in \textbf Z or $\mathbf w_i,\mathbf w_{i+1}\in S_2$ for some $i\in\{0,1,\dots,m-1\}$ and $\mathbf w_i\approx\mathbf w_{i+1}$ holds in \textbf V.

Suppose that $\mathbf w_0,\mathbf w_m\in S_2$ and the identities $\mathbf w_0\approx\mathbf w_1$ and $\mathbf w_{m-1}\approx\mathbf w_m$ hold in \textbf Z. Then $\mathbf w_1,\mathbf w_{m-1}\notin S$. The identities $\mathbf w_1\approx\mathbf w_2$ and $\mathbf w_{m-2}\approx\mathbf w_{m-1}$ hold in the variety \textbf V. Lemma~\ref{w_i in L} permits to assume that $\mathbf w_1,\mathbf w_{m-1}\notin L$. Since $\mathbf w_1,\mathbf w_{m-1}\notin S$, we have $\mathbf w_1,\mathbf w_{m-1}\in Z$. Then $\mathbf w_1\approx\mathbf w_{m-1}$ holds in $\mathbf V$. This means that the sequence of words
$$
\mathbf w_0,\mathbf w_1,\mathbf w_{m-1},\mathbf w_m
$$
is a deduction of the identity $\mathbf{u\approx v}$ from identities of the varieties \textbf V and \textbf Z. If $m>3$ then this deduction shorter than~\eqref{deduct}, while if $m=3$ then Lemma~\ref{ZVZ} applies.

It remains to consider the case when $\mathbf w_i,\mathbf w_{i+1}\in S_2$ for some $i\in\{0,1,\dots,m-1\}$ and $\mathbf w_i\approx\mathbf w_{i+1}$ holds in \textbf V. Suppose at first that $m-3<i<2$. Since $m\ge3$, we have $m=3$ and $i=1$. Then the identity $\mathbf w_1\approx\mathbf w_2$ holds in \textbf V, whence the identities $\mathbf w_0\approx\mathbf w_1$ and $\mathbf w_2\approx\mathbf w_3$ hold in \textbf Z. Now Lemma~\ref{ZVZ} applies. So, we may assume that either $i\ge 2$ or $i\le m-3$. By symmetry, it suffices to consider the former case. So, let $i\ge2$. Then the identity $\mathbf w_{i-1}\approx\mathbf w_i$ holds in \textbf Z, while the identity $\mathbf w_{i-2}\approx\mathbf w_{i-1}$ holds in \textbf V. Clearly, $\mathbf w_{i-1}\notin S$. Besides that, Lemma~\ref{w_i in L} allows us to assume that $\mathbf w_{i-1}\notin L$. Therefore, $\mathbf w_{i-1}\in Z$. Since $\mathbf w_{i-2}\approx\mathbf w_{i-1}$ holds in \textbf V, this identity holds also in \textbf N. Hence $\mathbf w_{i-2}\approx\mathbf w_{i-1}\approx 0$ hold in \textbf N, that is $\mathbf w_{i-2}\in Z$. Let $\pi$ be a unique non-trivial permutation on the set $\{x,y\}$. Remark~\ref{words from S} implies that $\mathbf w_{i+1}=\pi[\mathbf w_i]$. Since the variety \textbf Z satisfies the identity $\mathbf w_{i-1}\approx\mathbf w_i$, it satisfies also the identity $\pi[\mathbf w_{i-1}]\approx\mathbf\pi[\mathbf w_i]$, that is $\pi[\mathbf w_{i-1}]\approx\mathbf w_{i+1}$. Further, the variety \textbf N satisfies the identity $\pi[\mathbf w_{i-1}]\approx 0$ because it satisfies $\mathbf w_{i-1}\approx 0$. Thus, the identity $\mathbf w_{i-2}\approx\pi[\mathbf w_{i-1}]$ holds in \textbf N and therefore, in \textbf V. We see that the sequence of words
$$
\mathbf w_0,\mathbf w_1,\dots,\mathbf w_{i-2},\pi[\mathbf w_{i-1}],\mathbf w_{i+1},\dots,\mathbf w_m
$$
is a deduction of the identity $\mathbf{u\approx v}$ from identities of the varieties \textbf V and \textbf Z shorter than~\eqref{deduct}.

We complete the proof of Theorem~\ref{permut-3}.
\end{proof}

\begin{proof}[Proof of Proposition~\ref{modular not cancellable}.]
Put
$$
\mathbf V=\var\{xyzt\approx xyx\approx x^2\approx 0,\,x_1x_2x_3\approx x_{1\rho}x_{2\rho}x_{3\rho}\}.
$$
The variety \textbf V satisfies one of the identity systems~\eqref{xyz=zyx,xxy=0}--\eqref{xyz=xzy,xxy=0}. According to Theorem~\ref{permut-3} \textbf V is a modular element in the lattice \textbf{SEM}. It remains to check that \textbf V is not a cancellable element of this lattice. Let $\sigma$ and $\tau$ be non-trivial permutations from $\mathbb S_3$ such that the groups $\gr\{\rho\}$, $\gr\{\sigma\}$ and $\gr\{\tau\}$ are pairwise distinct. Put
\begin{align*}
\mathbf X={}&\var\{xyzt\approx xyx\approx x^2\approx 0,\,x_1x_2x_3\approx x_{1\sigma}x_{2\sigma}x_{3\sigma}\},\\
\mathbf Y={}&\var\{xyzt\approx xyx\approx x^2\approx 0,\,x_1x_2x_3\approx x_{1\tau}x_{2\tau}x_{3\tau}\}.
\end{align*}
Clearly, $\mathbf X\ne\mathbf Y$. It is evident that $\Perm_3(\mathbf V)=\gr\{\rho\}$, $\Perm_3(\mathbf X)=\gr\{\sigma\}$ and $\Perm_3(\mathbf Y)=\gr\{\tau\}$. Lemma~\ref{Perm_n}(ii) and Fig.~\ref{Sub(S_3)} imply that
$$
\Perm_3(\mathbf{V\wedge X})=\Perm_3(\mathbf V)\vee\Perm_3(\mathbf X)=\mathbb S_3.
$$
This means that the variety $\mathbf{V\wedge X}$ satisfies all permutational identities of length~3. Hence $\mathbf{V\wedge X\subseteq Y}$ and therefore, $\mathbf{V\wedge X\subseteq V\wedge Y}$. The opposite inclusion is verified analogously, whence $\mathbf{V\wedge X}=\mathbf{V\wedge Y}$.

It remains to verify that $\mathbf{V\vee X}=\mathbf{V\vee Y}$. Suppose that an identity $\mathbf{u\approx v}$ holds in the variety $\mathbf{V\vee X}$. We are going to check that then this identity holds in \textbf Y. We can assume that the identity $\mathbf{u\approx v}$ is non-trivial because the required conclusion is evident in the contrary case. If each of the words \textbf u and \textbf v either is non-linear or has the length $>3$ then $\mathbf u\approx 0\approx\mathbf v$ hold in \textbf Y. Let now \textbf u and \textbf v be linear words of length $\le 3$. If $\mathbf{\con(u)\ne\con(v)}$ then $\mathbf u\approx 0\approx\mathbf v$ hold in \textbf Y again because \textbf Y is a nil-variety. Therefore, we can assume that $\con(\mathbf u)=\con(\mathbf v)$. In particular, this means that $\ell(\mathbf u)=\ell(\mathbf v)$. Clearly, $\ell(\mathbf u)>1$ because the identity $\mathbf{u\approx v}$ is trivial otherwise. If $\ell(\mathbf u)=2$ then $\mathbf{u\approx v}$ is the commutative law. But then $\mathbf{u\approx v}$ is false in \textbf V, and moreover in $\mathbf{V\vee X}$. Finally, suppose that $\ell(\mathbf u)=3$. Then $\mathbf{u\approx v}$ is a permutational identity of length~3. Lemma~\ref{Perm_n}(i) and Fig.~\ref{Sub(S_3)} imply that 
$$
\Perm_3(\mathbf{V\vee X})=\Perm_3(\mathbf V)\wedge\Perm_3(\mathbf X)=\mathbb T.
$$
Therefore, the identity $\mathbf{u\approx v}$ is trivial. We have a contradiction. Thus, $\mathbf{u\approx v}$ holds in \textbf Y. Hence $\mathbf{V\vee X\supseteq Y}$ and therefore, $\mathbf{V\vee X\supseteq V\vee Y}$. The opposite inclusion is verified analogously, whence $\mathbf{V\vee X}=\mathbf{V\vee Y}$ and we are done.
\end{proof}

\subsection*{Acknowledgments.} The authors express their thanks to Dr. V.\,Shaprynski\v{\i} for fruitful discussions.

\small

\end{document}